\newtheorem{theorem}{Theorem}
\newtheorem{proposition}[theorem]{Proposition}
\newtheorem{lemma}[theorem]{Lemma}
\theoremstyle{definition}
\newtheorem{definition}[theorem]{Definition}
\newtheorem{remark}[theorem]{Remark}
\newcommand{\N}{\mathbb{N}}
\newcommand{\Z}{\mathbb{Z}}
\newcommand{\Q}{\mathbb{Q}}
\newcommand{\F}{\mathbb{F}}
\renewcommand{\epsilon}{\varepsilon}
\DeclareMathOperator{\GL}{GL}
\DeclareMathOperator{\End}{End}
\DeclareMathOperator{\Tr}{Tr}
\newcommand{\Ind}{\big\uparrow}
\newcommand{\Res}{\big\downarrow}
\newcommand\blfootnote[1]{%
  \begingroup
  \renewcommand\thefootnote{}\footnote{#1}%
  \addtocounter{footnote}{-1}%
  \endgroup
}
\newcounter{thmlistcnt}
	{\setcounter{thmlistcnt}{0}%
	\begin{list}{\emph{(\roman{thmlistcnt})}}{%
		\usecounter{thmlistcnt}%
		\setlength{\topsep}{0pt}%
		\setlength{\leftmargin}{0pt}%
		\setlength{\itemsep}{0pt}%
		\setlength{\itemindent}{17pt}}%
	}%
	{\end{list}}%
\begin{document}
\title[Nakayama's Conjecture]{A New Proof of Nakayama's Conjecture via Brauer Quotients of Young Modules}
\date{\today}
\author{William O'Donovan}
\begin{abstract}
We provide a self-contained proof of the main properties of Brauer quotients of Young modules. We then use these results to give a new inductive proof of Nakayama's Conjecture on the blocks of the symmetric group.
\end{abstract}
\maketitle
\thispagestyle{empty}

\section{Introduction}

Let $G$ be \blfootnote{2010 \emph{Mathematics Subject Classification:} 20C05, 20C30} a \blfootnote{\emph{Key words and phrases.} symmetric group, block, Young modules, Brauer quotient, vertex} finite group and $p$ be a prime. We say that a \emph{block} (or a $p$-block) of $G$ is a primitive idempotent $e$ in the centre of the group algebra $\F_p G$. We say that an indecomposable $\F G$-module $U$ \emph{lies in the block} $e$ (or belongs to $e$) if $eU = U$. Understanding the blocks of a group is an important problem in modular representation theory: for example, sorting the simple and indecomposable modules of $\F G$ into blocks yields a block diagonal decomposition of the Cartan matrix of $G$, as described in \cite[Corollary 12.1.8]{Webb} (hence the nomenclature). 

Unfortunately, in general it is also a difficult problem to understand the blocks of a finite group. An exception, however, is in the case of the symmetric group, where there is a beautiful combinatorial characterisation of the blocks, given by a result still known as Nakayama's Conjecture.  

We define a \emph{partition} of $n \in \N_0$ to be a sequence $ \lambda = (\lambda_1,\lambda_2,\ldots)$ of non-increasing non-negative integers, such that $\sum_i \lambda_i = n$. There is a close connection between much of the representation theory of the symmetric group $S_n$ and the combinatorics of partitions. A partition may be visualised by means of its \emph{Young diagram}, which is an array consisting of $\lambda_1$ boxes in the first row, $\lambda_2$ boxes in the second row, and so on.

A node $(i,j)$ in the Young diagram $[\lambda]$ of $\lambda$ is said to form part of the \emph{rim} if $(i+1,j+1) \notin [\lambda]$. A collection of $p$ edge-connected nodes in the rim of $[\lambda]$ is a \emph{$p$-hook} if their removal from $[\lambda]$ leaves the Young diagram of a partition. We define the \emph{$p$-core} of $\lambda$, which we denote by $c_p(\lambda)$, to be the partition obtained by repeatedly removing all $p$-hooks from $\lambda$. The number of $p$-hooks removed is called the \emph{$p$-weight} of $\lambda$. It is fairly easy to see using the abacus notation for partitions (see \cite[p.76--78]{JamKer}) that the $p$-core of a partition is independent of the manner in which we remove the $p$-hooks, and accordingly is well-defined. 

The importance of $p$-cores is that they label the blocks of the symmetric group, in the following sense, as Nakayama conjectured in 1940:

\begin{theorem}The blocks of the symmetric group are labelled by pairs $(\gamma,w)$, where $\gamma$ is a $p$-core and $w \in \N_0$ is such that $n = |\gamma| + pw$. Thus the Specht module $S^\lambda$ lies in the block $(\gamma, w)$ of $S_n$ if and only if $c_p(\lambda) = \gamma$.\end{theorem}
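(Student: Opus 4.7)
The plan is to proceed by induction on the $p$-weight $w$ of $\lambda$, using the machinery of Brauer quotients of Young modules developed earlier in the paper together with the Brauer correspondence for blocks of finite groups.

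First, I would reduce to the statement for Young modules: since $S^\lambda$ embeds in the Young permutation module $M^\lambda$ and one has the multiplicity-free decomposition $M^\lambda = Y^\lambda \oplus \bigoplus_{\mu \triangleright \lambda} (Y^\mu)^{k_{\lambda\mu}}$, I would argue that $S^\lambda$ lies in the same block as $Y^\lambda$. For $p$-regular $\lambda$ this is immediate since $D^\lambda$ is a composition factor of $Y^\lambda$ but not of $Y^\mu$ for $\mu \triangleright \lambda$; for $p$-singular $\lambda$ a minor refinement is required, for example via signed Young modules or by analysing the socle of $M^\lambda$.

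Next, for the inductive step, assume the theorem holds for $S_m$ with $m < n$. The base case $w = 0$ is handled by the observation that every hook length of a $p$-core is coprime to $p$, so the hook length formula gives $v_p(\dim S^\gamma) = v_p(n!)$, whence $S^\gamma$ is projective and forms a block of defect zero on its own. For $w \geq 1$, take $P \leq S_n$ to be the cyclic group generated by a $p$-cycle fixing $n - p$ specified points; then $N_{S_n}(P)/P$ contains a copy of $S_{n-p}$ essentially as a direct factor. By the main theorem on Brauer quotients proved earlier, $Y^\lambda(P)$ decomposes upon restriction to $S_{n-p}$ as a direct sum of Young modules $Y^\mu$ indexed by partitions $\mu \vdash n-p$ obtained by removing a single $p$-hook from $\lambda$. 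Every such $\mu$ satisfies $c_p(\mu) = c_p(\lambda)$ and has weight $w - 1$, so by the inductive hypothesis each $Y^\mu$ lies in the block of $S_{n-p}$ labelled $\bigl(c_p(\lambda),\, w - 1\bigr)$. Applying the Brauer correspondence then places $Y^\lambda$ in the unique block of $S_n$ whose Brauer correspondent is this block of $S_{n-p}$, and by tracking defect groups one checks this must be the block labelled $\bigl(c_p(\lambda), w\bigr)$.

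I expect the main obstacle will be the careful matching of blocks under the Brauer correspondence: namely, verifying that every $Y^\mu$ appearing in $Y^\lambda(P)$ lies in a single block of $S_{n-p}$ (so that the induction yields a well-defined block of $S_n$), and separately showing that distinct pairs $(\gamma, w)$ label distinct blocks. The converse direction in particular requires an additional argument — most naturally, a counting argument bounding the number of blocks of $S_n$ below by the number of admissible pairs $(\gamma, w)$ with $|\gamma| + pw = n$, which, combined with the forward direction established above, forces the labelling to be a bijection.
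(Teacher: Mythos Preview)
Your proposal has two substantial gaps.

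First, the claimed description of $Y^\lambda(P)$ for $P$ a single $p$-cycle is not what the paper's main theorem (Theorem~\ref{main}) provides. That theorem computes $Y^\lambda(Q_\rho)$ only for the specific vertex $Q_\rho$ of $Y^\lambda$---a Sylow $p$-subgroup of a Young subgroup determined by the $p$-adic expansion of $\lambda$---and the answer is a \emph{single} tensor product $Y^{\lambda(0)}\otimes\cdots\otimes Y^{\lambda(t)}$, not a sum over $p$-hook removals. For a single $p$-cycle $R$, what the paper establishes (Lemma~\ref{tens}) is only that $M^\lambda(R)$ is a sum of permutation modules $M^\eta$ indexed by compositions $\eta$ obtained by subtracting $p$ from one part of $\lambda$; the associated partitions need not have the same $p$-core as $\lambda$ (for $p=3$ and $\lambda=(5,3)$, subtracting from the first part gives the composition $(2,3)$, whose partition $(3,2)$ has $3$-core $(1,1)\neq(2)=c_3(\lambda)$). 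Extracting usable information about $Y^\lambda(R)$ from this is exactly the delicate content of Step~3 of the paper: one first passes to an auxiliary partition $\nu$ of the same $p$-type with $\nu_1-\nu_2\ge p$, chosen so that one identifiable summand $Y^\xi\boxtimes\F$ of $Y^\nu(R)$ really does satisfy $c_p(\xi)=c_p(\nu)$.

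Second, and more seriously, your inductive step is vacuous whenever $\lambda$ is $p$-restricted but not a $p$-core. By Theorem~\ref{main} such $Y^\lambda$ are projective, so $Y^\lambda(P)=0$ for every nontrivial $p$-subgroup $P$, and the Brauer-quotient argument yields no information at all. These partitions have positive $p$-weight, so they are not covered by your base case either. The paper handles them by an entirely separate mechanism (Step~4): iterating the map $\lambda\mapsto m(\lambda')$ to climb strictly in the dominance order until a non-$p$-restricted partition is reached, with duality and Lemma~\ref{chanlab} guaranteeing the block is preserved at each step. Your outline contains no substitute for this, and the hand-wave about ``signed Young modules'' in the reduction paragraph does not address it, since the difficulty is not the passage from $S^\lambda$ to $Y^\lambda$ but the fact that the projective $Y^\lambda$ themselves are invisible to Brauer quotients.
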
 

Nakayama's Conjecture was proved by Brauer and Robinson in 1947; see \cite{Brauer} and \cite{Robinson}. Since then, many proofs have been found. A proof using Brauer pairs can be found in \cite{Br2}. Murphy gave a proof of Nakayama's Conjecture by explicitly constructing a complete set of primitive idempotents for the symmetric group algebra in prime characteristic using Murphy operators in \cite{Mur}. Perhaps the shortest proof is \cite{MeiTap}, which uses generalised decomposition numbers; this argument can also be found in English in \cite[p.270--275]{JamKer}. 

With such an abundance of proofs, picking a favourite is a matter of taste. In this paper, we give a new proof of Nakayama's Conjecture, which we feel has two points to recommend it. First, our proof will use only the representation theory of the symmetric group, in keeping with the philosophy that results about the symmetric group deserve to be proved with just the machinery of the symmetric group. Secondly, our proof is comparatively elementary: it is free from any kind of calculation, and no more than basic knowledge of block theory is required. 

Our main tool will be the Brauer quotients of Young modules; the fundamental results on these were first proved by Grabmeier and Klyachko in \cite{Grab} and \cite{Kly}, using the Schur algebra. A proof using only the representation theory of the symmetric group was published in \cite{Erd}, and corrected in the setting of the general linear group in \cite{ErdSch}. Although the proof given in \cite{ErdSch} can be streamlined considerably for symmetric groups, this has not appeared in print. Moreover, some of the intermediate results used to obtain this proof will be required in our proof of Nakayama's Conjecture. For these reasons, we provide an account of the Brauer quotients of Young modules and a self-contained proof of the key result on these modules, stated as Theorem \ref{main} below.

The paper is structured as follows. In Section 2, we set out the important background on the Brauer quotient. We then re-prove the key properties of the Brauer quotients of Young modules. Section 4 is a summary of the results from block theory which we shall need throughout the proof of Nakayama's Conjecture. We begin proving Nakayama's Conjecture in Section 5 by induction on the degree of the symmetric group; step 1 is an easy base case. Next we show that Young modules having a common vertex lie in the same block if and only if their labelling partitions have the same p-core, in Step 2. In Step 3, we find a way to compare the blocks of non-projective Young modules which have different vertices. We conclude in step 4 by understanding the blocks of projective Young modules (about which taking Brauer quotients provides no information). We deal with Young modules labelled by $p$-core partitions by showing that they are simple and projective; for other $p$-restricted partitions we have to use the Mullineux map and duality to understand their blocks.

\section{The Brauer Quotient}

Throughout, let $p$ be a prime number, $\F$ be a field of characteristic $p$ and $G$ be a finite group.  For a fuller exposition of the material in this section, see Brou\'e's original paper \cite{Br1}. 

We say that an $\F G$-module $V$ is a \emph{$p$-permutation module} if whenever $P$ is a $p$-subgroup of $G$, there is a linear basis of $V$ which is permuted by $P$. It is not too hard to see that the $p$-permutation modules are precisely the $\F G$-modules with trivial source (see \cite[0.4]{Br1}).  

Given a $p$-subgroup $Q$ of $G$, define $V^Q = \lbrace v \in V : \text{$qv=v$ for all $q \in Q$} \rbrace$, the set of $Q$-fixed elements of $V$. Let $R$ be a subgroup of $Q$, and let $T$ be a transversal for $R$ in $Q$. We define the \emph{trace map} $\Tr^Q_R : V^R \rightarrow V^Q$ by $$\Tr_R^Q(v) = \sum_{g \in T} gv.$$ Now $$ \sum_{R < Q} \Tr_R^Q(V^R)$$ is a $\F N_G(Q)$-module which is contained in $V^Q$; hence we may define the following $\F N_G(Q)$-module: $$V(Q) = V^Q / \sum_{R < Q} \Tr_R^Q(V^R).$$ This is the \emph{Brauer quotient} of $V$ with respect to $Q$.

There is a more tangible way to think of the Brauer quotient. Let $P$ be a Sylow $p$-subgroup of $G$ with corresponding $p$-permutation basis $B$. For each $v \in B$, denote by $P_v$ the stabiliser of $v$ in $P$. The elements $\sum_{g \in P / P_v} gv = \Tr^P_{P_v}(v)$ form a basis of $V^P$. If $P_v$ is a proper subgroup of $P$, then this trace becomes zero on taking the Brauer quotient, so $V(P) = \langle B^P \rangle$.

The importance of the Brauer quotient is that it enables us to determine the vertices of $p$-permutation modules. More precisely, we have:

\begin{theorem}\label{brmain} \cite[Theorem 3.2]{Br1} Let $U$ be a $p$-permutation $\F G$-module and let $R$ be a $p$-subgroup of $G$. Then $R$ is contained in a vertex of $U$ if and only if $U(R) \not= 0$. Moreover, the vertices of $U$ are precisely the maximal $p$-subgroups $P$ of $G$ such that $U(P) \not= 0$. If $U$ has vertex $P$, then $U(P)$ is isomorphic to the Green correspondent of $U$.
\end{theorem}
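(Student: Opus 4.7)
My plan is to prove Theorem~\ref{brmain} by first establishing a concrete basis-level description of the Brauer quotient, which will give one direction of the vertex characterisation essentially for free, and then combining this with Green correspondence for the converse.

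First I would show that for a $p$-subgroup $R$, which up to conjugacy I may place inside a fixed Sylow $p$-subgroup $P$ with $p$-permutation basis $B$, the Brauer quotient $V(R)$ has basis $B^R$. The $R$-orbits on $B$ give orbit sums forming a basis of $V^R$; each non-singleton orbit has proper stabiliser $R_b < R$ and its orbit sum equals $\Tr_{R_b}^R(b)$, so it vanishes in $V(R)$. Conversely, for any $R' < R$ and $v \in V^{R'}$, the trace $\Tr_{R'}^R(v)$ has coefficient $[R:R'] \equiv 0 \pmod{p}$ on each $b \in B^R$, so it lies outside $\langle B^R \rangle$. The description has two consequences I shall use repeatedly: $V(R) \neq 0$ iff $B^R \neq \emptyset$, and the Brauer quotient is \emph{monotonic}, meaning that for $R \leq S$ the inclusion $V^S \hookrightarrow V^R$ descends to an injection $V(S) \hookrightarrow V(R)$ (sending basis to basis), so $V(S) \neq 0$ forces $V(R) \neq 0$.

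For the easier direction, let $U$ be indecomposable with vertex $Q$. Being trivial source, $U$ is a summand of the permutation module $\F_Q\!\uparrow^G$, whose natural basis $\{gQ\}$ has stabilisers the conjugates $gQg^{-1}$. Thus, if $R$ is not contained in any $G$-conjugate of $Q$, then $(\F_Q\!\uparrow^G)(R) = 0$, and so $U(R) = 0$ by additivity of the Brauer quotient.

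The main obstacle is the converse: if $R \leq Q$, showing $U(R) \neq 0$. By monotonicity it suffices to prove $U(Q) \neq 0$, and here I would invoke Green correspondence relative to $H = N_G(Q)$. Write $U\!\downarrow_H = f(U) \oplus X$, where $f(U)$ is the Green correspondent (indecomposable, vertex $Q$) and every summand of $X$ has vertex contained in some $Q \cap {}^g Q$ with $g \notin H$, hence properly contained in $Q$. By the easy direction applied inside $H$, $X(Q) = 0$. On the other hand, $f(U)$ is a summand of $\F_Q\!\uparrow^H$; since $Q \trianglelefteq H$, every coset $gQ$ with $g \in H$ is $Q$-fixed, so $Q$ acts trivially on the induction and hence on $f(U)$. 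Therefore $f(U)^Q = f(U)$, and for any $R < Q$ and $v \in f(U)$ we have $\Tr_R^Q(v) = [Q:R]v = 0$ since $[Q:R]$ is a positive power of $p$. Hence $f(U)(Q) = f(U)$, and so $U(Q) = f(U)(Q) \oplus X(Q) = f(U) \neq 0$. This not only completes the equivalence but simultaneously identifies $U(Q)$ with the Green correspondent; the maximality statement follows at once, since any vertex $P$ satisfies $U(P) \neq 0$ and any $p$-subgroup with nonzero Brauer quotient lies in some vertex.
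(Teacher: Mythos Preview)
The paper does not prove this theorem: it is stated with a citation to Brou\'e \cite[Theorem 3.2]{Br1} and no argument is given. The only related content in the paper is the paragraph preceding the statement, which sketches the identification $V(P)=\langle B^P\rangle$ for a $p$-permutation basis $B$; this is precisely the first step of your argument.

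Your proof is the standard one and is correct. A couple of minor remarks. First, the ``monotonicity'' claim is most safely phrased at the level of dimensions: since $\dim V(R)=|B^R|$ and $B^S\subseteq B^R$ for $R\leq S$, one gets $V(S)\neq 0\Rightarrow V(R)\neq 0$, which is all you use; the assertion that the inclusion $V^S\hookrightarrow V^R$ literally \emph{descends} to an injection of quotients requires checking that non-singleton $S$-orbit sums land in the span of non-singleton $R$-orbit sums, which is true but not quite what you wrote. Second, in the Green-correspondence step you implicitly use that the Brauer quotient with respect to $Q$ depends only on the $Q$-action, so that computing it for $U$ as a $G$-module or for $U\!\downarrow_H$ as an $H$-module gives the same thing (and $N_G(Q)=H$); this is immediate from the definition but worth saying. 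With these small clarifications the argument is complete and matches Brou\'e's original.
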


The following result, known as the Brou\'e correspondence, will also be extremely important.

\begin{theorem}\label{brcorr} \cite[Theorem 3.3]{Br1} Let $P$ be a $p$-subgroup of $G$. The map sending a $\F G$-module $U$ to its Brauer quotient $U(P)$ induces a 1-1 correspondence between isomorphism classes of indecomposable $p$-permutation $\F G$-modules with vertex $P$ and indecomposable projective $N_G(P)/P$-modules.

\end{theorem}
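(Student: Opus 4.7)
The plan is to verify that the assignment $U \mapsto U(P)$ is well-defined from the left-hand family into the right-hand family and is a bijection. Well-definedness and injectivity will both follow fairly directly from Theorem~\ref{brmain} together with the classical Green correspondence, but the main obstacle will be surjectivity: given an indecomposable projective $\mathbb{F}[N_G(P)/P]$-module, I will have to realise it as the Brauer quotient of some indecomposable $p$-permutation $\mathbb{F}G$-module with vertex $P$.

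For well-definedness, let $U$ be an indecomposable $p$-permutation $\mathbb{F}G$-module with vertex $P$. By Theorem~\ref{brmain}, $U(P)$ is non-zero and is isomorphic to the Green correspondent of $U$, so it is an indecomposable $\mathbb{F}N_G(P)$-module with vertex $P$. Because its elements are represented by $P$-fixed vectors of $U$, the subgroup $P$ acts trivially on $U(P)$, and so $U(P)$ descends to an $\mathbb{F}[N_G(P)/P]$-module. Since $P$ acts trivially, the vertex of $U(P)$ as an $\mathbb{F}[N_G(P)/P]$-module is the image of $P$ in $N_G(P)/P$, namely the trivial subgroup, so $U(P)$ is projective. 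Injectivity is now clear: if $U(P) \cong U'(P)$, then the Green correspondents of $U$ and $U'$ coincide and so $U \cong U'$.

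For surjectivity, let $M$ be an indecomposable projective $\mathbb{F}[N_G(P)/P]$-module and let $\tilde M$ be its inflation to $N_G(P)$. Since $M$ is a direct summand of the regular module $\mathbb{F}[N_G(P)/P]$, whose inflation is the permutation module $\mathbb{F}\ind_P^{N_G(P)}$, $\tilde M$ is a direct summand of $\mathbb{F}\ind_P^{N_G(P)}$ and hence a $p$-permutation $\mathbb{F}N_G(P)$-module with vertex contained in $P$. Since $P$ acts trivially on $\tilde M$, for any proper subgroup $Q < P$ and any $v \in \tilde M$ we have $\mathrm{Tr}_Q^P(v) = [P:Q]\cdot v = 0$, so $\tilde M(P) = \tilde M \neq 0$, and the vertex of $\tilde M$ therefore equals $P$ by Theorem~\ref{brmain}. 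The Green correspondence now produces an indecomposable $\mathbb{F}G$-module $U$ with vertex $P$ whose Green correspondent is $\tilde M$; because $\tilde M\ind^G$ is a summand of $\mathbb{F}\ind_P^G$, the module $U$ is trivial source and therefore $p$-permutation. Finally, Theorem~\ref{brmain} identifies $U(P) \cong \tilde M$ as $\mathbb{F}N_G(P)$-modules, which recovers $M$ as an $\mathbb{F}[N_G(P)/P]$-module, completing the bijection.
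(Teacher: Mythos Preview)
The paper does not provide its own proof of this statement; it is quoted as \cite[Theorem 3.3]{Br1} and used as a black box. Your argument is therefore not competing with anything in the paper, and it is essentially the standard reduction of the Brou\'e correspondence to the Green correspondence via Theorem~\ref{brmain}.

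Your proof is correct. The one step that could be tightened is the sentence ``the vertex of $U(P)$ as an $\mathbb{F}[N_G(P)/P]$-module is the image of $P$ in $N_G(P)/P$, namely the trivial subgroup''. As written this appeals to a general compatibility between vertices and inflation that you have not stated. A cleaner way to phrase it, using only what you have already established, is: since $U(P)$ has vertex $P$ as an $\mathbb{F}N_G(P)$-module, it is a summand of $U(P)\!\downarrow_P\!\uparrow^{N_G(P)}$; but $P$ acts trivially, so $U(P)\!\downarrow_P$ is a direct sum of copies of the trivial module, and hence $U(P)$ is a summand of a direct sum of copies of $\mathbb{F}\!\uparrow_P^{N_G(P)} = \mathbb{F}[N_G(P)/P]$, which is free as an $\mathbb{F}[N_G(P)/P]$-module. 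This gives projectivity directly. Everything else (injectivity via Green, and surjectivity by inflating a projective, checking its vertex is exactly $P$ using $\Tr_Q^P = [P:Q]\cdot\mathrm{id} = 0$, and then inducing up) is fine.
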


\section{Young Modules for Symmetric Groups}

We begin our account of the theory of Young modules by reminding the reader of the key definitions and notation which we shall use; more details of the representation theory of the symmetric group can be found in \cite{Jam}. 

A sequence of non-negative integers $(\lambda_1,\lambda_2,\ldots)$ such that $\sum_i \lambda_i = n$ is said to be a \emph{composition} of $n \in \N$ (note that in a partition, the sequence must be weakly decreasing). If $\lambda$ is a composition of $n$, we define the corresponding \emph{Young subgroup} $S_\lambda$ to be subgroup of $S_n$ which is the direct product of all the symmetric groups $S_{\lambda_i}$. The \emph{Young permutation module} corresponding to $\lambda$ is the $\F S_n$-module $\F \Ind_{S_\lambda}^{S_n}$ and is denoted by $M^\lambda$. Over a field of characteristic 0, the simple modules for $S_n$ are the \emph{Specht modules}, which are indexed by partitions of $n$; we use the notation $S^\lambda$ for the Specht module labelled by $\lambda$. The character of $S^\lambda$ over a field of characteristic 0 is denoted by $\chi^\lambda$.  

Now let $\F$ be a field of prime characteristic $p$, and let $\lambda$ be a partition of $n$. We say that $\lambda$ is \emph{p-restricted} if $\lambda_i - \lambda_{i+1} < p$ for every $i$, and that $\lambda$ is \emph{p-regular} if no non-zero part of $\lambda$ is repeated $p$ or more times. These sets of partitions give two ways to label the simple $\F S_n$-modules. If $\lambda$ is a $p$-regular partition of $n$, set $D^\lambda := S^\lambda / \text{rad}(S^\lambda)$; if $\lambda$ is a $p$-restricted partition of $n$, we put $D_\lambda := \text{soc}(S^\lambda)$. The sets $\lbrace D^\lambda : \text{$\lambda$ $p$-regular}\rbrace$ and $\lbrace D_\lambda : \text{$\lambda$ $p$-restricted}\rbrace$ form complete sets of nonisomorphic simple $\F S_n$-modules. We denote by $\trianglerighteq$ the dominance order on partitions. 

Write the permutation module $M^\lambda$ as a direct sum of indecomposable $\F S_n$-modules, say $M^\lambda = \bigoplus_i Y_i$. Let $t$ be a $\lambda$-tableau with corresponding signed column sum $\kappa_t$, as defined in \cite[Definitions 4.3]{Jam}. By the Submodule Theorem (see \cite[Theorem 4.8]{Jam}), if $U$ is a submodule of $M^\lambda$ then either $\kappa_t U = 0$ or $S^\lambda \subseteq U$. Since $\kappa_t M^\lambda$ is one-dimensional by \cite[Corollary 4.7]{Jam}, there is a unique summand $Y_j$ such that $\kappa_t Y_j \not=0$. Therefore, $Y_j$ is the unique summand of $M^\lambda$ containing $S^\lambda$ as a submodule; this is called the \emph{Young module} for $\lambda$ and is denoted by $Y^\lambda$. Our goal is to understand the Young modules, which we shall achieve by proving the following result.

\begin{theorem}\label{main}
The Young modules form a complete set of indecomposable, pairwise non-isomorphic, summands of the permutation modules $M^\lambda$. In the decomposition of $M^\lambda$ into indecomposable summands, $Y^\lambda$ appears exactly once, and all other summands are of the form $Y^\mu$, where $\mu \unrhd \lambda$. Write $\lambda = \sum_{i=0}^t \lambda(i)p^i$, with each $\lambda(i)$ a $p$-restricted partition. Let $r_i$ be the degree of $\lambda(i)$, and let $\rho$ be the partition of $n$ which has $r_i$ parts equal to $p^i$. Then a Sylow $p$-subgroup of $S_\rho$, say $P$, is a vertex of $Y^\lambda$, and the Green correspondent of $Y^\lambda$ satisfies the following isomorphism of $\F {N_{S_n}(P)}/{P}$-modules (in the sense as explained before the statement of Lemma \ref{tens}): $$Y^\lambda(P) \cong Y^{\lambda(0)} \otimes \cdots \otimes Y^{\lambda(t)}.$$
\end{theorem}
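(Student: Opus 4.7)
The plan is to handle the structural claims via a character-and-lifting argument, and then compute the Brauer quotient directly to pin down both the vertex and the Green correspondent. For the structural part, recall that in characteristic zero Young's rule gives $\C M^\lambda = \bigoplus_\mu K_{\mu \lambda} S^\mu$ with $K_{\mu \lambda} = 0$ unless $\mu \trianglerighteq \lambda$ and $K_{\lambda \lambda} = 1$. Since Young modules are $p$-permutation modules, they have trivial source and lift uniquely to characteristic zero by a theorem of Scott; the character of the lift of $Y^\mu$ then has the form $\chi^\mu + \sum_{\nu \triangleright \mu} a_\nu \chi^\nu$, from which $\mu$ is recoverable. Combined with an induction down the dominance order, and with the Submodule Theorem ensuring that $Y^\lambda$ is the unique summand of $M^\lambda$ containing $S^\lambda$ (appearing once by one-dimensionality of $\kappa_t M^\lambda$), this identifies every indecomposable summand of $M^\lambda$ with some $Y^\mu$ for $\mu \trianglerighteq \lambda$ and establishes pairwise non-isomorphism.

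For the vertex, first observe that $P$ may be taken to be a Sylow $p$-subgroup of $S_\lambda$: Sylows of $S_\lambda$ and $S_\rho$ have equal orders (a direct count using the base-$p$ digits of $\lambda$) and the same orbit structure on $\{1, \ldots, n\}$, namely $r_i$ orbits of size $p^i$, so they are $S_n$-conjugate. Since $Y^\lambda$ is a summand of $M^\lambda = \F \Ind_{S_\lambda}^{S_n}$, its vertex lies in $P$ by $S_\lambda$-projectivity. Theorem \ref{brmain} then reduces the problem to showing $Y^\lambda(P) \neq 0$ and identifying its structure as an $\F(N_{S_n}(P)/P)$-module.

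To compute $Y^\lambda(P)$, analyse $M^\lambda(P)$ on the tabloid basis. A tabloid survives the Brauer quotient iff each $P$-orbit on $\{1, \ldots, n\}$ lies in a single row, and the surviving tabloids correspond precisely to distributions of the $r_i$ orbits of size $p^i$ across the rows of $\lambda$ compatible with the decomposition $\lambda = \sum_i \lambda(i) p^i$. Tracking the action of $N_{S_n}(P)/P \cong \prod_i S_{r_i}$, which permutes the orbits of each common size, then yields an isomorphism of $\F(\prod_i S_{r_i})$-modules $M^\lambda(P) \cong M^{\lambda(0)} \otimes \cdots \otimes M^{\lambda(t)}$. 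By the structural result applied in each factor, every indecomposable summand of the right-hand side has the form $Y^{\mu(0)} \otimes \cdots \otimes Y^{\mu(t)}$ with each $\mu(i) \trianglerighteq \lambda(i)$. To single out $Y^\lambda(P)$, I would trace the image of the Specht submodule $S^\lambda \hookrightarrow Y^\lambda$ through the Brauer quotient and show it lands non-trivially in $S^{\lambda(0)} \otimes \cdots \otimes S^{\lambda(t)}$; the Submodule Theorem applied in each tensor factor then forces $\mu(i) = \lambda(i)$, yielding $Y^\lambda(P) \cong Y^{\lambda(0)} \otimes \cdots \otimes Y^{\lambda(t)}$, which Theorem \ref{brcorr} identifies with the Green correspondent.

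The main obstacle will be this last compatibility step: showing that $S^\lambda$ has nonzero image under the Brauer quotient map $M^\lambda \to M^\lambda(P)$ and that this image lies inside the external tensor product of Specht submodules. This simultaneously verifies $Y^\lambda(P) \neq 0$ (so $P$ really is the vertex, not a proper subgroup) and selects the correct indexing partition in each tensor factor, and it requires explicit tracking on polytabloids rather than any purely module-theoretic input.
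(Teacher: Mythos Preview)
Your proposal has two concrete errors that break the vertex and Brauer-quotient computation. First, a Sylow $p$-subgroup of $S_\lambda$ is \emph{not} in general conjugate to a Sylow $p$-subgroup of $S_\rho$: the $p$-adic expansion $\lambda=\sum_i\lambda(i)p^i$ with each $\lambda(i)$ $p$-restricted is \emph{not} the same as taking base-$p$ digits of each part $\lambda_j$, so your ``direct count'' fails. For $p=2$ and $\lambda=(4,2,1)$ one has $\lambda(0)=(2,2,1)$, $\lambda(1)=(1)$, hence $\rho=(2,1^5)$; then $|S_\lambda|_2=16$ while $|S_\rho|_2=2$, and the two Sylows have completely different orbit structures. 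So you cannot conclude that the vertex of $Y^\lambda$ is contained in $Q_\rho$ from relative $S_\lambda$-projectivity alone: the paper needs Lemma~\ref{goup} (that $M^\lambda(P)$ depends only on the orbit partition of $P$) precisely to force vertices down to Sylows of such $S_\rho$.

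Second, even with the correct $P=Q_\rho$, your identification $M^\lambda(P)\cong M^{\lambda(0)}\otimes\cdots\otimes M^{\lambda(t)}$ is wrong: Lemma~\ref{tens} gives a direct sum over \emph{all} tuples $(\alpha(0),\ldots,\alpha(t))$ of compositions of the $r_i$ with $\sum_i\alpha(i)p^i=\lambda$, and there is typically more than one. In the same example $M^{(4,2,1)}(Q_\rho)\cong M^{(2,2,1)}\oplus M^{(4,1)}$ as $\F S_5$-modules. Consequently, singling out $Y^\lambda(P)$ among the summands is not automatic, and the Specht-tracing you propose would have to distinguish the correct tensor factor from these extra pieces as well; it is far from clear that the image of $(S^\lambda)^P$ in $M^\lambda(P)$ is nonzero, let alone that it lands inside $S^{\lambda(0)}\otimes\cdots\otimes S^{\lambda(t)}$. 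The paper sidesteps both issues entirely: it proves (1) by a counting argument (matching the total number of indecomposable summands, via the Brou\'e correspondence and Lemma~\ref{exp}, against the number of partitions of $n$), and then proves (2)--(3) by a second induction on dominance, using that for $\mu\rhd\lambda$ the already-known $Y^\mu(Q_\rho)$ is not $Y^{\lambda(0)}\otimes\cdots\otimes Y^{\lambda(t)}$, so this projective summand of $M^\lambda(Q_\rho)$ must come from $Y^\lambda$ itself.
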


Suppose that $Y^\lambda$ is a direct summand of $M^\mu$. Then by the above argument, $\kappa_t Y^\lambda \not= 0$, and hence $\kappa_t M^\mu \not= 0$. By \cite[Lemma 4.6]{Jam}, it follows that $\mu \unrhd \lambda$. Furthermore, if $Y^\lambda \cong Y^\mu$, then $Y^\lambda$ is a direct summand of $M^\mu$ and $Y^\mu$ is a summand of $M^\lambda$, whence $\lambda \trianglerighteq \mu$ and $\mu \trianglerighteq \lambda$, and so $\lambda = \mu$.

\begin{remark}It is tempting to argue in the above that, if $t$ is a $\lambda$-tableau, then $\kappa_t S^\lambda \not=0$. Unfortunately, this is false: for example, if $p = n = 2$, $\lambda = (1^2)$ and $t$ is the row-standard $\lambda$-tableau, then it is easy to see that $\kappa_t S^\lambda = 0$. This justifies our taking a slightly longer path than might appear necessary. \end{remark}

We have proved that the Young modules are pairwise non-isomorphic and established our claim that only Young modules labelled by partitions dominating $\lambda$ can appear as summands of $M^\lambda$. To prove the rest of our main result, we shall need to study Brauer quotients of permutation modules. The following lemma is critical; this result was originally given in \cite[Lemma 1]{Erd}, but we provide a new, simpler, proof.

\begin{lemma}\label{goup}Let $M$ be a $p$-permutation $\F G$-module, and let $P$ and $Q$ be $p$-subgroups of $G$ with $Q < P$. Suppose that $M(P) = M(Q)$ as sets. Then $M$ has no summand with vertex $Q$. \end{lemma}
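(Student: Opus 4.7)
The plan is to argue by contradiction. Suppose, for a contradiction, that $M$ has an indecomposable summand $U$ with vertex $Q$; write $M = U \oplus M'$. Theorem \ref{brmain} immediately yields $U(Q) \neq 0$, since $Q$ is itself contained in the vertex $Q$. On the other hand, every vertex of $U$ is $G$-conjugate to $Q$, hence has order $|Q| < |P|$, so $P$ cannot be contained in any vertex of $U$; therefore $U(P) = 0$.

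The next step is to use the fact that the Brauer quotient is an additive functor. The decomposition $M = U \oplus M'$ produces $M^R = U^R \oplus M'^R$ for every $p$-subgroup $R$, and the trace maps $\Tr^R_S$ respect this splitting, so the corresponding sums of trace images also split as direct sums. Passing to the quotient gives
$$M(R) = U(R) \oplus M'(R)$$
for every $p$-subgroup $R$.

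The key quantitative input is the basis description of the Brauer quotient recalled just before Theorem \ref{brmain}. For any $p$-permutation $\F G$-module $N$ and any $p$-subgroup $R$, choosing a $p$-permutation basis $B_N$ for a Sylow containing $R$ identifies $N(R)$ with $\Span(B_N^R)$, so $\dim N(R) = |B_N^R|$. Since $B_N^{R'} \subseteq B_N^R$ whenever $R \subseteq R'$, the function $R \mapsto \dim N(R)$ is weakly decreasing in $R$. Applying this to $M'$ (which, as a summand of $M$, is still a $p$-permutation module) with $Q < P$ gives $\dim M'(P) \leq \dim M'(Q)$.

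Putting it all together:
$$\dim M(P) = \dim U(P) + \dim M'(P) = \dim M'(P) \leq \dim M'(Q) < \dim M'(Q) + \dim U(Q) = \dim M(Q),$$
where the strict inequality is forced by $U(Q) \neq 0$. But the hypothesis $M(P) = M(Q)$ as sets means these are identified as the same subspace $\Span(B^P) = \Span(B^Q)$ of $M$ under the canonical basis identification, so in particular their dimensions agree. This is the desired contradiction. The only real subtlety is the interpretation of ``as sets'': once one commits to the natural reading via the $p$-permutation basis (so that $M(P) \subseteq M(Q)$ inside $M$ always, with equality iff $B^P = B^Q$), the argument is essentially a dimension count, and the remaining ingredients (functoriality and monotonicity of $\dim N(R)$) are immediate consequences of the definitions already assembled in Section 2.
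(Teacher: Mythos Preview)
Your proof is correct and follows essentially the same approach as the paper's. The paper decomposes $M$ into all its indecomposable summands $M_i$, chooses compatible $p$-permutation bases $B_i$ with respect to $P$ (hence also with respect to $Q$), and observes directly that $M_i(P) \subseteq M_i(Q)$ with strict containment for any $M_i$ of vertex $Q$; your version isolates a single summand $U$ and phrases the same comparison as a dimension inequality, but the underlying idea is identical.
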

\begin{proof}
Write $M$ as a sum of indecomposable modules, say $M = \bigoplus_{i=1}^n M_i$. For each $i$, let $B_i$ be a $p$-permutation basis of $M_i$ with respect to $P$; observe that $B_i$ is also a $p$-permutation basis with respect to $Q$. Therefore, a basis for $M_i(Q)$ is $B_i^Q$, and a basis for $M_i(P)$ is $B_i^P$. Since $Q < P$, we have that $B_i^P \subseteq B_i^Q$ and hence $M_i(P) \subseteq M_i(Q)$. 

Suppose that $M_i$ has vertex $Q$. Then $M_i(P) = 0$ and $M_i(Q) \not= 0$, by Theorem \ref{brmain}. But then $M(P)$ is strictly contained in $M(Q)$, which is a contradiction.
\end{proof}

Let $\lambda$ be a partition of $n$, and let $Q$ be a $p$-subgroup of $S_n$. We consider the structure of $M^\lambda(Q)$. Observe that if $\lbrace t \rbrace$ is a $\lambda$-tabloid which is fixed by $Q$ and $\mathcal{O}$ is an orbit of $Q$ on $\lbrace 1,\ldots, n \rbrace$, then all elements of $\mathcal{O}$ must lie in the same row of $\lbrace t \rbrace$. Moreover, if $P$ is a $p$-subgroup of $S_n$ with the same orbits as $Q$, then $M^\lambda(P) = M^\lambda(Q)$. In particular, if $Q$ has $r_i$ orbits of length $p^i$, and if $\rho$ is the partition of $n$ with $r_i$ parts equal to $p^i$, then a Sylow $p$-subgroup of the Young subgroup $S_\rho$, say $P$, satisfies $M^\lambda(P) = M^\lambda(Q)$. It follows from Lemma \ref{goup} that the possible vertices of summands of $M^\lambda$ are Sylow $p$-subgroups of such Young subgroups $S_\rho$. 

Fix a partition $\rho$ with all its parts powers of $p$ and let $Q_\rho$ be a Sylow $p$-subgroup of $S_\rho$; in order to exploit the Brou\'e correspondence, we must understand the group $N_{S_n}(Q_\rho)$. Observe that, since $N_{S_n}(S_\rho)$ permutes orbits of $S_\rho$ of length $p^i$ as blocks for its action, $N_{S_n}(S_\rho)$ is conjugate to the direct product $(S_1 \wr S_{r_0}) \times (S_p \wr S_{r_1}) \cdots \times (S_{p^t} \wr S_{r_t})$. Consequently, $N_{S_n}(S_\rho)/S_\rho $ is isomorphic to  $S_{r_0} \times \cdots \times S_{r_t}$. On the other hand, applying the Frattini argument to $N_{S_n}(S_\rho)$, we have that $$N_{S_n}(S_\rho) = N_{N_{S_n}(S_\rho)}(Q_\rho) S_\rho \subset N_{S_n}(Q_\rho)S_\rho.$$
Since the right-hand side is contained in $N_{S_n}(S_\rho)$, we have that $N_{S_n}(S_\rho) \cong N_{S_n}(Q_\rho)S_\rho$. It follows from this and the Second Isomorphism Theorem that $$ N_{S_n}(S_\rho)/S_\rho = N_{S_n}(Q_\rho)/N_{S_\rho}(Q_\rho).$$
But the action of $S_\rho$ on $M^\lambda(Q_\rho)$ is trivial,so the structure of $M^\lambda(Q_\rho)$ as a module for $N_{S_n}(Q_\rho)/Q_\rho$ is the same as its structure considered as a module for $N_{S_n}(Q_\rho)/N_{S_\rho}(Q_\rho)$, which have already seen is isomorphic to $S_{r_0} \times \cdots \times S_{r_t}$. This justifies our considering $M^\lambda(Q_\rho)$ and $Y^\lambda(Q_\rho)$ as modules for this product of symmetric groups: it is simply more convenient to treat these Brauer quotients this way. We shall use this frequently without further comment throughout the paper.

\begin{lemma}\label{tens} \cite[Proposition 1]{Erd} There is an isomorphism of $\F N_{S_n}(Q_\rho)/Q_\rho$-modules: $$M^\lambda(Q_\rho) \cong \bigoplus_{\alpha \in T} M^{\alpha(0)} \otimes \cdots \otimes M^{\alpha(t)},$$ where $T$ is the set of all $t+1$-tuples $(\alpha_0,\ldots,\alpha_t)$ such that $\alpha(i)$ is a composition of $r_i$ and $\sum_{i=0}^t \alpha(i)p^i = \lambda$. \end{lemma}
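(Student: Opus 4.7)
The strategy is to exhibit an explicit basis of $M^\lambda(Q_\rho)$ consisting of $Q_\rho$-fixed $\lambda$-tabloids, partition it according to a combinatorial statistic indexed by $T$, and identify each part with a tensor product of permutation modules for $S_{r_0} \times \cdots \times S_{r_t}$.

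First I would note that the $\lambda$-tabloids form a $p$-permutation basis of $M^\lambda$ with respect to \emph{any} $p$-subgroup of $S_n$. By the discussion preceding Theorem \ref{brmain}, it follows that $M^\lambda(Q_\rho)$ has as an $\F$-basis the set of $\lambda$-tabloids fixed by $Q_\rho$; call this basis $\mathcal{B}$. Next, a tabloid $\{t\}$ lies in $\mathcal{B}$ precisely when every $Q_\rho$-orbit on $\{1,\ldots,n\}$ is contained in a single row of $\{t\}$, as already observed in the paragraph before Lemma \ref{tens}. Since $Q_\rho$ has exactly $r_i$ orbits of length $p^i$ for $i=0,\ldots,t$, we may record, for each $\{t\} \in \mathcal{B}$, the number $\alpha(i)_j$ of orbits of length $p^i$ that lie in row $j$. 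The resulting sequence $\alpha(i) = (\alpha(i)_j)_j$ is a composition of $r_i$ satisfying $\sum_i \alpha(i)_j p^i = \lambda_j$ for every $j$, so $\alpha = (\alpha(0),\ldots,\alpha(t)) \in T$.

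Let $\mathcal{B}_\alpha \subseteq \mathcal{B}$ be the set of tabloids with shape $\alpha$, and let $V_\alpha = \F \mathcal{B}_\alpha$. The key step is to observe that $N_{S_n}(Q_\rho)$ permutes the $Q_\rho$-orbits among themselves (preserving their lengths, since conjugation preserves orbit length), so $N_{S_n}(Q_\rho)$ and hence $N_{S_n}(Q_\rho)/Q_\rho$ stabilises each $\mathcal{B}_\alpha$. This decomposes $M^\lambda(Q_\rho) = \bigoplus_{\alpha \in T} V_\alpha$ as $\F N_{S_n}(Q_\rho)/Q_\rho$-modules. Using the identification $N_{S_n}(Q_\rho)/Q_\rho \cong S_{r_0} \times \cdots \times S_{r_t}$ from the text, where $S_{r_i}$ acts by permuting the $r_i$ orbits of length $p^i$, the set $\mathcal{B}_\alpha$ factors as a Cartesian product (over $i$) of the sets of ways to distribute the $r_i$ orbits of length $p^i$ into the rows of $\lambda$ with multiplicities $\alpha(i)_j$. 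Each factor is naturally isomorphic as an $S_{r_i}$-set to the coset space $S_{r_i}/S_{\alpha(i)}$, where $S_{\alpha(i)}$ is the Young subgroup stabilising a fixed distribution. Hence $V_\alpha \cong M^{\alpha(0)} \otimes \cdots \otimes M^{\alpha(t)}$ as modules for the product of symmetric groups, completing the proof.

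The main obstacle will be verifying cleanly that the identification of $\mathcal{B}_\alpha$ with a product of coset spaces is equivariant for the correct action; everything else is a basis count combined with the explicit description of the $Q_\rho$-fixed tabloids. A minor technical point is that the conjugation identification of $N_{S_n}(S_\rho)/S_\rho$ with $S_{r_0}\times\cdots\times S_{r_t}$ needs to be compatible with the induced action on $Q_\rho$-orbits, but this is automatic because $S_\rho$ acts trivially on the set of its own orbits of each length, and the quotient $N_{S_n}(S_\rho)/S_\rho$ acts precisely by permuting those orbits.
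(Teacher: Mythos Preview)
Your proposal is correct and follows essentially the same approach as the paper: both describe the basis of $M^\lambda(Q_\rho)$ as the $Q_\rho$-fixed tabloids, partition them according to the tuple $\alpha$ recording how many orbits of each size lie in each row, and then identify each piece with the appropriate tensor product of Young permutation modules. The only cosmetic difference is that the paper writes down an explicit bijection $\phi$ sending a fixed tabloid $\{t\}$ to a tensor $\{v_0\}\otimes\cdots\otimes\{v_t\}$ of smaller tabloids (where $\{v_i\}$ records which length-$p^i$ orbit sits in which row) and checks equivariance by hand, whereas you phrase the same identification via coset spaces $S_{r_i}/S_{\alpha(i)}$; these are two descriptions of the same map.
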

\begin{proof}
Consider a $\lambda$-tabloid $\lbrace t \rbrace$ which is fixed by the action of $Q_\rho$, so the elements of each $Q_\rho$-orbit lie in the same row of $\lbrace t \rbrace$. For each $i$, let $\mathcal{O}^i_1,\ldots,\mathcal{O}^i_{r_i}$ denote the $Q_\rho$-orbits of length $p^i$. We define a composition $\alpha(i)$ of $r_i$ by setting the $j^\textrm{th}$ entry of $\alpha(i)$ equal to $k \in \N \cup \lbrace 0 \rbrace$ if the $j^\textrm{th}$ row of $\lbrace t \rbrace$ contains exactly $k$ elements from $\lbrace \mathcal{O}^i_1,\ldots,\mathcal{O}^i_{r_i} \rbrace$.

We now define a linear map $\phi :  M^\lambda(P) \rightarrow M^{\alpha(0)} \otimes \cdots \otimes M^{\alpha(t)}$ by setting $\phi(\lbrace t \rbrace) = \lbrace v_0 \rbrace \otimes \cdots \otimes \lbrace v_t \rbrace$, where $\lbrace v_i \rbrace$ is the $\alpha(i)$-tabloid which has entry $j$ in row $k$ if and only if the elements of $\mathcal{O}^i_j$ lie in row $k$ of $\lbrace t \rbrace$. This map induces a linear isomorphism between the two modules, as claimed, so all that remains is to show that $\phi$ is a $\F N_{S_n}(Q_\rho)/Q_\rho$-module isomorphism.

Let $g \in \F N_{S_n}(Q_\rho)/Q_\rho \cong S_{r_0} \times \cdots \times S_{r_t}$, so $g$ permutes the $P$-orbits of length $p^i$. Say that $\phi(g \lbrace t \rbrace) = \lbrace w_0 \rbrace \otimes \cdots \otimes \lbrace w_t \rbrace$. Suppose that the orbit $\mathcal{O}^i_j$ lies in row $k$ of $\lbrace t \rbrace$; then the orbit $\mathcal{O}^i_{g(j)}$ lies in row $k$ of $g \lbrace t \rbrace$ and hence the entry $g(j)$ is in row $k$ of $\lbrace w_i \rbrace$. But, by construction, the tabloid $\lbrace v_i \rbrace$ has entry $j$ in row $k$, whence $ g \lbrace v_i \rbrace$ has $g(j)$ in row $k$. Therefore, $g \lbrace v_i \rbrace = \lbrace w_i \rbrace$ for every $i$, and so $\phi$ is indeed a homomorphism, as required.  
\end{proof}

Finally, we shall require the following easy combinatorial result about partitions, often referred to as the $p$-adic expansion of a partition.

\begin{lemma}\label{exp}Let $n \in \N$ and $p$ be a prime. There is a bijection between the set of all partitions $\lambda$ of $n$ and all tuples of the form $(\alpha(0),\ldots,\alpha(t))$, such that $\alpha(i)$ is a $p$-restricted partition for each $i$, given by $ \lambda \leftrightarrow (\alpha(0),\ldots,\alpha(t))$ 
where $\lambda = \sum \alpha(i)p^i$.  \end{lemma}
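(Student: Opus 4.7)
The plan is to show both maps of the claimed bijection explicitly, using the uniqueness of the base-$p$ expansion applied to the consecutive differences of $\lambda$. Throughout I interpret $\lambda = \sum_{i=0}^{t} \alpha(i)p^i$ componentwise, i.e.\ $\lambda_j = \sum_{i=0}^{t} \alpha(i)_j\, p^i$ for every $j$; note that this is forced once we know $\alpha(i)$ is $p$-restricted, because the parts of each $\alpha(i)$ cannot be rescaled freely.

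First I would go from $\lambda$ to the tuple. Given $\lambda$, set $\delta_j = \lambda_j - \lambda_{j+1} \in \N_0$ and write each $\delta_j$ uniquely in base $p$ as $\delta_j = \sum_{i\ge 0} d_{j,i}\, p^i$ with $0 \le d_{j,i} < p$. Since $\lambda$ has only finitely many nonzero parts, only finitely many $\delta_j$ are nonzero, and each is bounded, so only finitely many $d_{j,i}$ are nonzero; let $t$ be maximal with some $d_{j,t} \neq 0$. Define $\alpha(i)_j = \sum_{k \ge j} d_{k,i}$. Then $\alpha(i)$ is weakly decreasing with finite support, hence a partition, and
\[
\alpha(i)_j - \alpha(i)_{j+1} = d_{j,i} < p,
\]
so $\alpha(i)$ is $p$-restricted. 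A telescoping argument gives $\lambda_j = \sum_{k \ge j} \delta_k = \sum_i p^i \sum_{k \ge j} d_{k,i} = \sum_i p^i \alpha(i)_j$, which verifies $\lambda = \sum_i \alpha(i) p^i$.

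Next I would define the inverse map. Given a tuple $(\alpha(0),\ldots,\alpha(t))$ of $p$-restricted partitions, set $\lambda_j := \sum_{i=0}^{t} \alpha(i)_j\, p^i$. The $p$-restricted condition guarantees $0 \le \alpha(i)_j - \alpha(i)_{j+1} < p$, so
\[
\lambda_j - \lambda_{j+1} = \sum_{i=0}^{t} \bigl(\alpha(i)_j - \alpha(i)_{j+1}\bigr)\, p^i
\]
is already displayed as a valid base-$p$ expansion. In particular $\lambda_j \ge \lambda_{j+1}$, so $\lambda$ is a partition, and $|\lambda| = \sum_i |\alpha(i)| p^i$.

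Finally I would argue the two maps are mutual inverses. Going tuple~$\to$~partition~$\to$~tuple, the inner summands $\alpha(i)_j - \alpha(i)_{j+1}$ are, by the $p$-restricted hypothesis, exactly the base-$p$ digits of $\delta_j = \lambda_j - \lambda_{j+1}$, so the partial sums $\sum_{k \ge j} d_{k,i}$ recover $\alpha(i)_j$. The other composition recovers $\lambda_j = \sum_{k\ge j} \delta_k$ by the same calculation. The only real content is the observation that $p$-restrictedness of $\alpha(i)$ is precisely what makes the displayed formula for $\lambda_j - \lambda_{j+1}$ a genuine base-$p$ expansion, so there is no serious obstacle; the statement reduces to uniqueness of the base-$p$ expansion applied row-by-row to the differences of $\lambda$.
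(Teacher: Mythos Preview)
Your argument is correct. The key observation---that $p$-restrictedness of each $\alpha(i)$ is exactly the condition making the consecutive differences $\alpha(i)_j - \alpha(i)_{j+1}$ the base-$p$ digits of $\lambda_j - \lambda_{j+1}$---is precisely what drives the bijection, and you have checked both compositions carefully.

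As for comparison with the paper: there is nothing to compare against. The paper states this lemma without proof, calling it ``the following easy combinatorial result about partitions, often referred to as the $p$-adic expansion of a partition.'' Your writeup is therefore strictly more than what the paper offers; it supplies the standard argument that the paper assumes the reader can reconstruct. One very minor point: you might want to remark that the length $t$ of the tuple is determined by $\lambda$ (as the largest $i$ with $\alpha(i)\neq\emptyset$), so that tuples differing only by trailing empty partitions are identified; otherwise the target of the bijection is slightly ambiguous as stated.
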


We are now ready to complete the proof of Theorem \ref{main}. To do this, we must prove the following three assertions; this tripartite division is in the same spirit as the proof of the main theorem in \cite{Erd}.
\begin{enumerate}
\item Every summand of $M^\lambda$ is a Young module.
\item A vertex of $Y^\lambda$ is $Q_\rho$ (recall that this is a Sylow $p$-subgroup of the Young subgroup $S_\rho$).
\item $Y^\lambda(Q) \cong Y^{\lambda(0)} \otimes \cdots \otimes Y^{\lambda(s)},$ as $N_{S_n}(Q)/Q$-modules.
\end{enumerate}

\begin{proof}
We proceed by induction on $n$. If $n < p$, then $\F_p S_n$ is a semisimple algebra, so all its modules are projective. The number of indecomposable projective modules equals the number of simple $\F_p S_n$-modules, which is the number of partitions of $n$; this is the same as the number of $p$-restricted partitions of $n$.  Therefore, all summands of permutation modules $M^\mu$ are Young modules, giving (1). Furthermore, $Y^\lambda$ is projective, so has trivial vertex, whereas the Young subgroup $S_\rho$ has order coprime to $p$, so (2) holds. Since the vertex of $Y^\lambda$ is the identity group, $Y^\lambda$ is its own Brauer quotient, so (3) is trivially true.

Now suppose that $n \geq p$ and the result is true for all smaller degrees. The number of indecomposable projective $\F_p S_n$-modules equals the number of $p$-restricted partitions of $n$. We want to count the number of non-projective summands of $M^\mu$ as $\mu$ ranges over all partitions of $n$. By Theorem \ref{brcorr}, this is the same as the number of projective summands of all $M^\mu (Q_\rho)$, where $\rho$ ranges over all partitions of $n$ whose parts are all $p$-powers, excluding the partition $(1^n)$ (because we are excluding the trivial group as a vertex).

The Brauer quotient $M^\mu(Q_\rho)$ is a direct sum of modules of the form $M^{\alpha_0} \otimes \cdots \otimes M^{\alpha_t}$, by Lemma \ref{tens}. The indecomposable projective summands are therefore of the form $P^{\alpha(0)} \otimes \cdots  \otimes P^{\alpha(t)}$, where $P^{\alpha(i)}$ is an indecomposable projective module for $S_{r_i}.$ The number of possible $P^{\alpha(i)}$ is equal to the number of $p$-restricted partitions of $r_i$. Therefore, the total number of such summands is equal to the number of tuples of $p$-restricted partitions $(\alpha(0),\ldots,\alpha(t))$ such that $\sum \alpha(i) p^i$ is a partition of $n$, excluding the tuples just equal to $(\alpha(0))$. 

By Lemma \ref{exp}, the number of such summands is equal to the number of partitions of $n$, less the number of $p$-restricted partitions. Hence the total number of summands (projective and non-projective) equals the number of partitions of $n$. However, for each partition $\lambda$ of $n$, we already have the Young module $Y^\lambda$ as a summand of $M^\lambda$. Consequently, there can be no other summands, and (1) is established.  

We prove (2) and (3) by a further induction on the dominance order of partitions. Write $n = \sum a_ip^i$, the $p$-adic expansion of $n$. The module $Y^{(n)}$ is the trivial module, so it has vertex a Sylow $p$-subgroup of $S_n$. By the construction of Sylow $p$-subgroups of $S_n$ as iterated wreath products, it follows that $Y^{(n)}$ has vertex a Sylow $p$-subgroup of $S_\rho$, where $\rho$ has $a_i$ parts equal to $p^i$. Moreover, the Brauer quotient is the trivial module, which is isomorphic to $Y^{(a_0)} \otimes \cdots \otimes Y^{(a_t)}$. Now suppose that $\lambda < (n)$ and $\lambda$ is not $p$-restricted. Write $\lambda = \sum_{i=0}^t \lambda(i)p^i$, with each $\lambda(i)$ a $p$-restricted partition. Let $\rho$ and $Q_\rho$ be as in the statement of Theorem \ref{main}.

By Lemma \ref{tens}, $M^{\lambda(0)} \otimes \cdots \otimes M^{\lambda(t)}$ is a summand of $M^\lambda(Q_\rho)$. Since the degree of each $\lambda(i)$ is strictly smaller than that of $\lambda$, we may apply the inductive hypothesis to each tensor factor. Therefore, $M^\lambda(Q_\rho)$ has the indecomposable projective module $X:= Y^{\lambda(0)} \otimes \cdots \otimes Y^{\lambda(t)}$ as a direct summand. This corresponds to an indecomposable summand of $M^\lambda$ with vertex $Q_\rho$.   We have already seen that $M^\lambda$ is a direct sum of $Y^\lambda$ and other modules $Y^\mu$, where $\mu > \lambda$ (and, by (1), these are the only summands). By the inductive hypothesis, the Brauer quotient of $Y^\mu$ for $\mu > \lambda$ is not $X$, so $Y^\lambda$ has vertex $P$ and Brauer quotient $X$, as required. So (2) and (3) hold, except for the case when $\lambda$ is $p$-restricted.

Finally, we have seen that if $\lambda$ is not $p$-restricted, then $Y^\lambda$ has non-trivial vertex, so cannot be projective. It follows that all the remaining Young modules must be projective; in other words, if $\lambda$ is $p$-restricted, then $Y^\lambda$ is projective. 
\end{proof}

\section{Block Theory}
In this section, we set out the important tools we shall need from block theory in proving Nakayama's Conjecture. For a general introduction to block theory, we refer the reader to \cite[Chapter 4]{Alp}. At several points in the argument, we shall wish to pass between different types of module labelled by a partition of $n$. Our first lemma justifies this.

\begin{lemma}\label{chanlab}Let $\lambda$ be a $p$-regular partition of $n$ and $B$ be a block of $S_n$. The following are equivalent: \begin{enumerate}
\item $Y^\lambda$ lies in $B$;
\item Every summand of $S^\lambda$ lies in $B$;
\item $D^\lambda$ lies in $B$.
\end{enumerate} \end{lemma}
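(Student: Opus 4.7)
The plan is to prove the cycle (1) $\Rightarrow$ (2) $\Rightarrow$ (3) $\Rightarrow$ (1) using only two standard block-theoretic facts. First, if a module $V$ lies in a block $B$ with central idempotent $e$, then $e$ acts as the identity on $V$; this identity action passes to every submodule of $V$ and descends to every quotient of $V$, so both lie in $B$. Secondly, an indecomposable (or a simple) module lies in a unique block, which coincides with the block of each of its composition factors.

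For (1) $\Rightarrow$ (2), recall that by the definition of $Y^\lambda$ we have $S^\lambda \subseteq Y^\lambda$, so the first fact yields $S^\lambda \in B$, and hence every indecomposable summand of $S^\lambda$ lies in $B$. For (2) $\Rightarrow$ (3), observe that $\rad$ respects direct sum decompositions: writing $S^\lambda = \bigoplus_i X_i$ with each $X_i$ indecomposable gives $D^\lambda = S^\lambda / \rad(S^\lambda) = \bigoplus_i X_i / \rad(X_i)$, and simplicity of $D^\lambda$ forces all but one of these top-quotients to vanish. Thus $D^\lambda$ is a quotient of a single summand $X_j$, which lies in $B$ by (2), and so $D^\lambda \in B$. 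For (3) $\Rightarrow$ (1), note that $D^\lambda$ is the head of $S^\lambda$ and so in particular a composition factor of $S^\lambda$; since $S^\lambda$ is a submodule of the indecomposable module $Y^\lambda$, it follows that $D^\lambda$ is a composition factor of $Y^\lambda$. By the second fact, $D^\lambda$ therefore lies in the unique block containing $Y^\lambda$. But by hypothesis $D^\lambda$ also lies in the unique block $B$, so these blocks must coincide and $Y^\lambda \in B$.

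I do not foresee a significant obstacle: the argument is bookkeeping of how block membership is transferred across submodules, quotients, and composition series. The only point requiring care is (2) $\Rightarrow$ (3), where one has to identify a single summand of $S^\lambda$ having $D^\lambda$ as its head; this is handled by the decomposition-respecting behaviour of $\rad$ together with simplicity of $D^\lambda$, as indicated above. Notably, the argument does not assume that $S^\lambda$ is indecomposable for $p$-regular $\lambda$.
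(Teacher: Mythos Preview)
Your proof is correct and follows essentially the same route as the paper: the implications $(1)\Rightarrow(2)$ and $(3)\Rightarrow(1)$ are argued identically, and for $(2)\Rightarrow(3)$ the paper simply observes that if the block idempotent $e$ acts as the identity on every summand of $S^\lambda$ then it acts as the identity on $S^\lambda$ itself and hence on its subquotient $D^\lambda$, whereas you instead locate a single summand $X_j$ with head $D^\lambda$. Both arguments are valid; note, however, that your version implicitly forces $S^\lambda$ to be indecomposable (any nonzero $X_i$ has nonzero head), so the closing remark that you ``do not assume $S^\lambda$ is indecomposable'' is slightly misleading---you have in fact reproved it.
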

\begin{proof}
Assuming (1), let $e$ be the block idempotent corresponding to $B$; $Y^\lambda$ lies in $B$, so $e$ acts as the identity on $Y^\lambda$. Since $S^\lambda$ is a submodule of $Y^\lambda$, $e$ also acts as the identity on $S^\lambda$, whence every summand of $S^\lambda$ must lie in $B$. Similarly, $D^\lambda$ is a subquotient of $S^\lambda$, so if $e$ acts as the identity on $S^\lambda$, then $e$ acts as the identity on $D^\lambda$ as well.

Conversely, extending a composition series for $S^\lambda$ to one for $Y^\lambda$ shows that $D^\lambda$ is a composition factor of $Y^\lambda$. Young modules are indecomposable, so $Y^\lambda$ lies in the block $B$ if and only if $Y^\lambda$ has a composition factor in $B$. Therefore, if $D^\lambda$ lies in $B$, then $Y^\lambda$ also lies in $B$.
\end{proof}

Our next result gives a useful condition for a group to have only one block; to present it, we shall require the notion of covering, as defined in \cite[p.105]{Alp}. If $G$ is a group with normal subgroup $N$, and $B$ and $C$ are blocks of $G$ and $N$ respectively, we say that $B$ covers $C$ if there is some $\F G$-module $M$ lying in $B$ such that $M \Res_N$ has a summand lying in $C$. Recall that for a group $G$, the block in which the trivial $\F G$-module lies is called the principal block of $G$ and is denoted by $b_0(G)$.

\begin{lemma}\label{oneblock} Suppose that the group $G$ has a normal $p$-subgroup $L$ such that $C_G(L) \leq L$. Then $G$ has a unique block. \end{lemma}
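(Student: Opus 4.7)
The plan is to exploit the Brauer homomorphism $\mathrm{Br}_L: (\F G)^L \to \F C_G(L)$, which is an $\F$-algebra homomorphism whose behaviour on block idempotents is controlled by defect groups. My strategy is to show that every block idempotent maps to $1$ under $\mathrm{Br}_L$, from which uniqueness of the block will follow by orthogonality.

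First I would observe that $C_G(L) \leq L$, combined with the trivial inclusion $Z(L) \leq C_G(L)$, forces $C_G(L) = Z(L)$, an abelian $p$-group. Consequently $\F C_G(L)$ is a local algebra whose only idempotents are $0$ and $1$. Next I would invoke two standard results from block theory found in \cite[Chapter 4]{Alp}: any normal $p$-subgroup of $G$ is contained in every defect group of every block, and $\mathrm{Br}_Q(e_B) \neq 0$ whenever $Q$ is contained in some defect group of the block $B$. Applied with $Q = L$, these show that $\mathrm{Br}_L(e_B)$ is a nonzero idempotent in $\F C_G(L)$ for every block idempotent $e_B$; by the preceding sentence, this forces $\mathrm{Br}_L(e_B) = 1$.

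To finish, I would argue by contradiction. If $G$ had two distinct blocks with primitive central idempotents $e_1 \neq e_2$, these would be orthogonal, so $\mathrm{Br}_L(e_1 e_2) = \mathrm{Br}_L(0) = 0$, while the multiplicativity of $\mathrm{Br}_L$ would give $\mathrm{Br}_L(e_1)\mathrm{Br}_L(e_2) = 1 \cdot 1 = 1$, a contradiction. The only delicate point is the invocation of the two defect-group facts in the middle paragraph; these sit a little beyond the bare definitions of blocks, but are standard material in the first chapters of any book on modular representation theory, and once they are in place the proof reduces to the formal calculation above.
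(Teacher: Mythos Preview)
Your argument is correct, but it follows a different path from the paper's proof. The paper argues via the theory of block covering: since $L$ is a $p$-group it has only the principal block $b_0(L)$, whose defect group is $L$ itself; the hypothesis $C_G(L)\le L$ then lets one invoke \cite[Theorem~15.1(5)]{Alp} to conclude that exactly one block of $G$ covers $b_0(L)$, and since every block of $G$ covers some block of $L$ (\cite[Theorem~15.1(4)]{Alp}), $G$ has a single block. Your route instead works directly with the Brauer homomorphism on $Z(\F G)$: you reduce to the observation that $\F C_G(L)=\F Z(L)$ is local, and then force every block idempotent to hit $1$ by combining the two standard facts that normal $p$-subgroups lie in all defect groups and that $\mathrm{Br}_Q(e_B)\neq 0$ precisely when $Q$ is contained in a defect group of $B$. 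Both proofs quote roughly the same depth of background from \cite{Alp}; the paper's version is slightly more self-contained within the covering framework it has already introduced for the Brauer correspondence, while yours is arguably closer in spirit to the Brauer-quotient machinery developed in Section~2 and avoids the covering language entirely.
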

\begin{proof}
We recall that $b_0(L)$ has defect group $L$. By \cite[Theorem 15.1(5)]{Alp}, there is a unique block $B$ of $G$ covering $b_0(L)$. Since $L$ is a $p$-group, $L$ has just one indecomposable projective module, and hence only one block. Moreover, every block of $L$ is covered by some block of $G$ by \cite[Theorem 15.1(4)]{Alp}, and so $G$ must have a unique block.
\end{proof}

We shall also need to understand how taking duals affects the block in which a $\F S_n$-module lies; the answer is provided by the following elementary lemma.

\begin{lemma}\label{dualblock}Let $M$ be a $\F S_n$-module lying in the block $B$ of $S_n$. Then the dual module $M^\star$ also lies in $B$.\end{lemma}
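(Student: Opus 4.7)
The plan is to exploit the fact that in $\F S_n$ every conjugacy class is closed under inversion, so every central element (and in particular every block idempotent) is fixed by the anti-involution $g \mapsto g^{-1}$ of $\F S_n$. This anti-involution is precisely what governs the passage from a module to its dual, so invariance of the block idempotent under inversion will immediately give the result.

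In detail, first I would recall that $M$ lies in the block $B$ iff the corresponding central primitive idempotent $e_B \in \F S_n$ acts as the identity on $M$. Next, I would spell out the $S_n$-action on $M^\star = \Hom_\F(M,\F)$: for $g \in S_n$, $\phi \in M^\star$, $m \in M$, we have $(g\cdot\phi)(m) = \phi(g^{-1}m)$, and extending $\F$-linearly, $(x\cdot\phi)(m) = \phi(\sigma(x)m)$ for any $x \in \F S_n$, where $\sigma : \F S_n \to \F S_n$ is the anti-automorphism determined by $\sigma(g) = g^{-1}$.

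I would then observe that $\sigma$ fixes every conjugacy class sum of $S_n$, because in the symmetric group each permutation is conjugate to its inverse (they share a cycle type). Hence $\sigma$ fixes every element of the centre $Z(\F S_n)$, and in particular $\sigma(e_B) = e_B$. Combining this with the formula above, for $\phi \in M^\star$ and $m \in M$ we have $(e_B\cdot\phi)(m) = \phi(\sigma(e_B) m) = \phi(e_B m) = \phi(m)$, so $e_B$ acts as the identity on $M^\star$, i.e. $M^\star$ lies in $B$.

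There is no real obstacle here; the only point that needs a sentence of justification is the ambidextrous property of $S_n$, which follows from the fact that $g$ and $g^{-1}$ have the same cycle type for any permutation $g$.
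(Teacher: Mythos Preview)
Your argument is correct and complete. It differs from the paper's approach, so a brief comparison is in order.

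The paper reduces to $M$ indecomposable, picks a composition factor $D^\lambda$ of $M$, invokes the self-duality of the simple $\F S_n$-modules to conclude that $D^\lambda$ is also a composition factor of $M^\star$, and then uses the fact that an indecomposable module lies in the block of any of its composition factors. Your proof instead works directly with the block idempotent: you show that the anti-automorphism $\sigma(g)=g^{-1}$ fixes $e_B$ because conjugacy classes of $S_n$ are inversion-closed, and then read off $e_B M^\star = M^\star$ from the definition of the dual action. Both arguments ultimately rest on the same special feature of $S_n$ (every element is conjugate to its inverse, equivalently all characters are real, equivalently all simples are self-dual), but your route is a bit more self-contained: it avoids appealing to the self-duality of the $D^\lambda$ and to the composition-factor criterion for block membership. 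The paper's route, on the other hand, makes explicit the representation-theoretic reason the result holds and ties in naturally with the later use of composition factors in Step~4.
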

\begin{proof}
By considering each indecomposable summand of $M$ separately if necessary, we may assume that $M$ is indecomposable. Let $D^\lambda$ be a composition factor of $M$, so $D^\lambda$ lies in $B$. However, since $D^\lambda$ is self-dual, $D^\lambda$ is also a composition factor of $M^\star$: $M^\star$ is an indecomposable module with a composition factor in $B$, so $M^\star$ lies in $B$.
\end{proof}

The following generalisation of Brauer's Second Main Theorem will be essential in our argument; to state it, we recall the definition of the Brauer correspondence from \cite[p.101]{Alp}. If $H$ is a subgroup of $G$, and $C$ is a block of $H$, we say the block $B$ of $G$ \emph{corresponds} to $C$, and denote this by $C^G = B$, if $C$ (considered as a module for $H \times H$) is a direct summand of $(B \times B)\Res_{H \times H}$ and $B$ is the unique block of $G$ with this property.

\begin{lemma}\label{genbr2} \cite[Lemma 7.4]{Wild} Let $M$ be an indecomposable $p$-permutation $\F G$-module with vertex $P$, such that $M$ lies in the block $B$ of $G$. Let $Q$ be a subgroup of $P$, and suppose that the Brauer quotient $M(Q)$ has a summand in the block $C$ of $N_{G}(Q)$. Then $C^{G}$ is defined and $C^{G} = B$. \end{lemma}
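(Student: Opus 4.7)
The plan is to work with block idempotents and the Brauer homomorphism on $Z(\F G)$. Let $e_B \in Z(\F G)$ and $e_C \in Z(\F N_G(Q))$ denote the block idempotents of $B$ and $C$. Since $M$ lies in $B$, the idempotent $e_B$ acts as the identity on $M$, and therefore on $M^Q$ and on the quotient $M(Q)$. By hypothesis, $M(Q)$ has an indecomposable summand $N$ lying in $C$, so $e_C$ acts as the identity on $N$.

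The central tool is the Brauer homomorphism $\mathrm{Br}_Q : (\F G)^Q \to \F C_G(Q)$ (with $Q$ acting by conjugation), sending $\sum_g a_g g$ to $\sum_{g \in C_G(Q)} a_g g$. This is an algebra homomorphism, so $\bar{e}_B := \mathrm{Br}_Q(e_B)$ is an idempotent; moreover, because $e_B$ is $G$-fixed and $\mathrm{Br}_Q$ is $N_G(Q)$-equivariant, $\bar{e}_B$ lands in $Z(\F C_G(Q))^{N_G(Q)} \subseteq Z(\F N_G(Q))$. The key technical input is that the action of $e_B$ on $M(Q)$ coincides with that of $\bar{e}_B$: splitting $e_B$ into the sum over $Q$-conjugacy classes of group elements, the classes supported outside $C_G(Q)$ contribute terms of the form $\Tr^Q_R(x)$ with $R<Q$, and a short calculation (the same mechanism by which the Brauer quotient is defined) shows that such terms act as zero on $M(Q)$. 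This is the Brauer-homomorphism version of Brauer's Second Main Theorem.

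Now decompose $\bar{e}_B = \sum_i e_{D_i}$ as a sum of primitive central idempotents of $\F N_G(Q)$, where the $D_i$ are blocks of $N_G(Q)$; under the block-idempotent characterisation of the Brauer correspondence, these $D_i$ are precisely the blocks of $N_G(Q)$ with correspondent $B$ in $G$. Since $\bar{e}_B$ acts as the identity on $N \neq 0$, at least one $e_{D_i}$ must act nontrivially on $N$. But $e_C$ also acts as the identity on $N$, so $e_C \cdot e_{D_i} \neq 0$ for some $i$; by orthogonality of distinct primitive central idempotents, this forces $e_C = e_{D_i}$. Hence $C$ is a Brauer correspondent of $B$, so $C^G$ is defined and equals $B$.

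The main obstacle is the technical claim of the second paragraph: that $e_B$ and $\bar{e}_B$ induce the same endomorphism of $M(Q)$. A secondary obstacle is reconciling the paper's module-theoretic formulation of the Brauer correspondence (in terms of summands of bimodule restrictions) with the block-idempotent formulation used above; this equivalence is standard but requires invocation. With those ingredients in place, the argument above reduces the lemma to straightforward idempotent bookkeeping.
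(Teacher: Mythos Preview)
The paper does not give its own proof of this lemma; it is simply quoted from \cite[Lemma~7.4]{Wild}, so there is nothing to compare your approach against. That said, your proposal is the standard argument and is essentially correct.

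A couple of remarks on the two obstacles you flag. For the first, your sketch already contains the missing computation: if $g \notin C_G(Q)$ and $R = C_Q(g) < Q$, then for $v \in M^Q$ one has $r(gv) = g(rv) = gv$ for all $r \in R$, so $gv \in M^R$ and hence $\bigl(\sum_{t \in Q/R} tgt^{-1}\bigr)v = \sum_t t(gv) = \Tr^Q_R(gv)$ lies in $\Tr^Q_R(M^R)$; thus the non-$C_G(Q)$ part of $e_B$ kills $M(Q)$, and $e_B$ and $\bar e_B$ agree there. For the second, the equivalence between the bimodule formulation of $C^G = B$ used in the paper (via \cite{Alp}) and the idempotent criterion ``$e_C$ occurs in the primitive decomposition of $\mathrm{Br}_Q(e_B)$ in $Z(\F N_G(Q))$'' is standard when $H = N_G(Q)$ (more generally whenever $QC_G(Q) \le H \le N_G(Q)$), and in particular $C^G$ is automatically defined in this situation. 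With these two ingredients supplied, your idempotent bookkeeping goes through exactly as written.
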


\section{Proof of Nakayama's Conjecture}
We now begin the proof of Nakayama's Conjecture, which we break up into a number of steps. If $\gamma$ is a $p$-core, we denote by $b^\gamma$ the block of $S_n$ which is labelled by $\gamma$. The proof is by induction on $n$.

\subsection*{Step 1: Base Case}
If $n < p$, then every partition of $n$ is $p$-restricted, and so every Young module is projective by Theorem \ref{main}. Moreover, the algebra $\F S_n$ is semisimple, so each Young module is simple. It follows by \cite[Proposition 13.3(2)]{Alp} that any two Young modules lie in different blocks. On the other hand, all the partitions of $n$ are $p$-cores, so the result holds. Now suppose that $n \geq p$ and Nakayama's Conjecture holds for all symmetric groups of lower degree.

\subsection*{Step 2: Common Vertices} Let $\lambda$ be a partition of $n$ with $p$-adic expansion $\lambda = \sum_{i=0}^t \lambda(i)p^i$, and put $r_i = |\lambda(i)|$. Recall that by Theorem \ref{main} the tuple $(r_0,r_1,\ldots,r_t)$ determines the vertex of the module $Y^\lambda$.  We call the tuple $(r_0,r_1,\ldots,r_t)$ the \emph{$p$-type} of $\lambda$.

Let $\lambda$ and $\mu$ be partitions of $n$ of the same $p$-type which are not $p$-restricted (so $r_0 < n$). Write the $p$-adic expansions as $\lambda = \sum_{i=0}^t \lambda(i)p^i$, $\mu = \sum_{i=0}^s \mu(i)p^i$. Then $Y^\lambda$ and $Y^\mu$ have common vertex $Q$ as defined in Theorem \ref{main}, and their Brauer quotients satisfy \begin{align*}Y^\lambda(Q) \cong Y^{\lambda(0)} \boxtimes \cdots \boxtimes Y^{\lambda(t)}, \\ Y^\mu(Q) \cong Y^{\mu(0)} \boxtimes \cdots \boxtimes Y^{\mu(t)}, \end{align*} as $N_{S_n}(Q) \cong S_{r_{0}} \times N_{S_{n-{r_0}}}(Q)$-modules.~Since a $p$-core is necessarily $p$-restricted, $\lambda(0)$ has the same $p$-core as $\lambda$ and $\mu(0)$ has the same $p$-core as $\mu$. We may apply the inductive hypothesis to the first tensor factor, because $r_0 < n$. Moreover, the group $N_{S_{n-{r_0}}}(Q)$ has a unique block, by applying Lemma \ref{oneblock} with $L = Q$. Indeed, if $R$ is an elementary abelian subgroup of $Q$ generated by $p$-cycles, and $R$ has maximal rank among all subgroups of $Q$ with these properties, then $C_{N_{S_{n-{r_0}}}(Q)}(Q) \leq  C_{N_{S_{n-{r_0}}}(Q)}(R) = R \leq Q$.

Therefore, $Y^\lambda(Q)$ and $Y^\mu(Q)$ lie in the blocks $b^{c_p(\lambda)} \otimes b_0(N_{S_{n-{r_0}}}(Q))$ and $b^{c_p(\mu)} \otimes b_0(N_{S_{n-{r_0}}}(Q))$ of $N_{S_n}(Q)$, respectively. If $\lambda$ and $\mu$ have the same $p$-core, then these blocks are the same, and by Lemma \ref{genbr2}, $Y^\lambda$ and $Y^\mu$ lie in the same block. Conversely, suppose that $\lambda$ and $\mu$ have different $p$-core, but $Y^\lambda$ and $Y^\mu$ lie in the same block $B$ of $S_n$. Then, again by Lemma \ref{genbr2}, we have $(b^{c_p(\lambda)} \otimes b_0(N_{S_{n-{r_0}}}(Q)))^{S_n} = B$ and $(b^{c_p(\lambda)} \otimes b_0(N_{S_{n-{r_0}}}(Q)))^{S_n} = B$. However, $B$ has a unique Brauer correspondent with respect to $N_{S_n}(Q)$ by Brauer's First Main Theorem (see, for example \cite[Theorem 14.2]{Alp}, so this is a contradiction. We summarise our progress so far:

\begin{proposition} \label{step2} Let $\lambda$ and $\mu$ be partitions of $n$ of the same $p$-type which are not $p$-restricted. Then $Y^\lambda$ and $Y^\mu$ lie in the same block of $S_n$ if and only if $c_p(\lambda) = c_p(\mu)$. \end{proposition}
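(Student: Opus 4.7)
The plan is to exploit the common vertex guaranteed by Theorem \ref{main}, together with the Brauer correspondence, to reduce the block comparison to a problem in smaller symmetric groups where the inductive hypothesis already applies.

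Writing $\lambda = \sum_{i=0}^t \lambda(i)p^i$ and $\mu = \sum_{i=0}^t \mu(i)p^i$ (of equal length, since $\lambda$ and $\mu$ share a $p$-type), Theorem \ref{main} supplies $Y^\lambda$ and $Y^\mu$ with a common vertex $Q$---a Sylow $p$-subgroup of $S_\rho$---and with Brauer quotients
$$Y^\lambda(Q) \cong Y^{\lambda(0)} \otimes \cdots \otimes Y^{\lambda(t)}, \qquad Y^\mu(Q) \cong Y^{\mu(0)} \otimes \cdots \otimes Y^{\mu(t)}.$$
Because $\lambda$ is not $p$-restricted, $r_0 < n$, so the first tensor factor falls within the scope of the inductive hypothesis.

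I would next decompose $N_{S_n}(Q) \cong S_{r_0} \times N_{S_{n-r_0}}(Q)$, so that the block to which each Brauer quotient belongs factors as an external tensor product. The key structural input is that $N_{S_{n-r_0}}(Q)$ has a unique block, which I would establish via Lemma \ref{oneblock} applied with $L = Q$: choosing an elementary abelian $p$-subgroup $R \le Q$ generated by $p$-cycles and of maximal rank among such subgroups, a short support-of-cycle computation in the symmetric group shows $C_{N_{S_{n-r_0}}(Q)}(R) = R$, whence $C_{N_{S_{n-r_0}}(Q)}(Q) \le R \le Q$ as required. Applying the inductive hypothesis to the first tensor factor (passing from Specht modules to Young modules via the inclusion $S^{\lambda(0)} \subseteq Y^{\lambda(0)}$ and indecomposability), $Y^\lambda(Q)$ lies in the block $b^{c_p(\lambda(0))} \otimes b_0(N_{S_{n-r_0}}(Q))$, and similarly for $\mu$; since $\lambda(0)$ is the $p$-restricted component of the expansion of $\lambda$ and $p$-cores are themselves $p$-restricted, $c_p(\lambda(0)) = c_p(\lambda)$, and likewise for $\mu$.

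The final step is to transfer this information back to $S_n$ via Lemma \ref{genbr2}. If $c_p(\lambda) = c_p(\mu)$, then $Y^\lambda$ and $Y^\mu$ have Brauer quotients in a common block of $N_{S_n}(Q)$, so both lie in its Brauer correspondent, which is a single block of $S_n$. Conversely, if they lay in a common block $B$ of $S_n$ with $c_p(\lambda) \ne c_p(\mu)$, then Lemma \ref{genbr2} would produce two distinct blocks of $N_{S_n}(Q)$ each Brauer-corresponding to $B$, contradicting Brauer's First Main Theorem. The main obstacle I anticipate is the verification that $N_{S_{n-r_0}}(Q)$ has a unique block---namely, pinning down the elementary abelian subgroup $R$ and executing the centraliser calculation---since the remainder is a clean assembly of the machinery already in place.
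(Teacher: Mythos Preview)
Your proposal is correct and follows essentially the same argument as the paper: common vertex $Q$ from Theorem~\ref{main}, the factorisation $N_{S_n}(Q)\cong S_{r_0}\times N_{S_{n-r_0}}(Q)$, the single-block result for the second factor via Lemma~\ref{oneblock} with the same maximal-rank elementary abelian $R\le Q$ generated by $p$-cycles, the inductive hypothesis on the first factor together with $c_p(\lambda(0))=c_p(\lambda)$, and the transfer via Lemma~\ref{genbr2} with Brauer's First Main Theorem for the converse. Your explicit remark about passing between Specht and Young modules is a small elaboration of what the paper leaves implicit, but the overall strategy is identical.
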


\subsection*{Step 3: Different $p$-type}We now aim to find a way to compare two Young modules which have different vertex. Given any possible $p$-type $(r_0,\ldots,r_t)$ with $t \not= 0$, we claim that there is a partition $\nu = (\nu_1,\nu_2,\ldots)$ of this $p$-type such that $\nu_1 - \nu_2 \geq p$. 

Indeed, since $t \not= 0$, there is some $i > 0$ such that $r_i \geq 1$. If $r_i = 1$, set $\nu(i) = (1)$, otherwise, we set $\nu(i) = (2, 1^{r_i-2})$; we observe that the partition $\nu(i)$ is $p$-restricted unless $p = 2$ and $r_i = 2$. We say that a 2-type is \emph{exceptional} if for every $i > 0$, $r_i$ is either 0 or 2. For now, suppose that our type is not exceptional, so at least one $\nu(j)$ is $p$-restricted, say $\nu(i)$ (where $i > 0$). Moreover, if $\nu$ is any partition of $p$-type $(r_0,\ldots,r_t)$ having $\nu(i)$ in its $p$-adic expansion, then $\nu_1 - \nu_2 \geq p^i(\nu(i)_1 - \nu(i)_2) \geq p$, as required.    

Let $\nu$ be such a partition and consider the Brauer quotient of $M^\nu$ with respect to the cyclic group $R := \langle (1,\ldots,p) \rangle$. Then, by Lemma \ref{tens}, $M^\nu(R)$ is isomorphic, as a $\F N_{S_n}(R)/R$-module, to a direct sum of modules of the form $M^\eta\boxtimes \F$, where $\eta$ is a composition of $n-p$ obtained by subtracting $p$ from a part of $\nu$. Define $\xi := (\nu_1-p,\nu_2,\ldots)$; by the previous paragraph, $\xi$ is a partition of $n-p$ and $c_p(\xi) = c_p(\nu)$.

Observe that if $\lambda \rhd \nu$, then the factor $M^\xi\boxtimes \F $ does not appear in the decomposition of $M^\lambda(R)$; if it did, then $\lambda$ could be obtained by adding $p$ to a part of $\xi$, but all such partitions are less than or equal to $\nu$ in the dominance order. Consequently, $Y^\xi \boxtimes \F$ does not appear in the decomposition of $Y^\lambda(R)$.  We have, by Lemma \ref{tens}, that $M^\nu(R) \supset M^\xi \boxtimes \F $. The module $M^\nu$ is a direct sum of $Y^\nu$ and modules $Y^\lambda$ for $\lambda \rhd \nu$. Since $Y^\xi \boxtimes \F$ appears in this decomposition, but not as a summand of any $Y^\lambda(R)$, it follows that $Y^\xi \boxtimes \F $ is a direct summand of $Y^\nu(R)$. 

By the inductive hypothesis, $Y^\xi$ lies in the block of $S_{n-p}$ labelled by $(c_p(\xi),w-1) = (c_p(\nu), w-1)$, where $w$ is the $p$-weight of $\lambda$. The group $N_{S_p}(R) \cong C_p \rtimes C_{p-1}$ has a unique block, by applying Lemma \ref{oneblock} with $L = R$. Hence, by induction, $Y^\nu(R)$ has a summand in the block $b^{c_p(\nu)} \otimes b_0(N_{S_p}(R))$ of $N_{S_n}(R)$. 

Consequently, suppose that $\lambda$ and $\mu$ are partitions of $n$ which are not $p$-restricted, and if $p = 2$, suppose further that neither $\lambda$ nor $\mu$ has exceptional 2-type. Then, by following the above procedure, we can find partitions $\nu_\lambda$ and $\nu_\mu$ of $n$ which are also not $p$-restricted such that: \begin{enumerate}
\item $\nu_\lambda$ has the same $p$-type as $\lambda$, and $\nu_\mu$ has the same $p$-type as $\mu$;
\item $c_p(\nu_\lambda) = c_p(\lambda)$ and $c_p(\nu_\mu) = c_p(\mu)$;
\item $Y^{\nu_\lambda}(R)$ has a summand in the block $b^{c_p(\lambda)} \otimes b_0(N_{S_p}(R))$ of $N_{S_n}(R)$ and $Y^{\nu_\mu}(R)$ has a summand in the block $b^{c_p(\mu)} \otimes b_0(N_{S_p}(R))$ of $N_{S_n}(R)$.
\end{enumerate}

By (1), (2) and Proposition \ref{step2}, $Y^\lambda$ lies in the same block as $Y^{\nu_\lambda}$, and $Y^\mu$ lies in the same block as $Y^{\nu_\mu}$. It then follows from (3) and Lemma \ref{genbr2} that $Y^\lambda$ and $Y^\mu$ lie in the same block of $S_n$ if $c_p(\lambda) = c_p(\mu)$. On the other hand, if $c_p(\lambda) \not= c_p(\mu)$, then $Y^{\nu_\lambda}(R)$ and $Y^{\nu_\mu}(R)$ have summands in different blocks, by the inductive hypothesis applied to $S_{n-p}$. By Lemma \ref{genbr2} and Brauer's First Main Theorem, $Y^{\nu_\lambda}$ and $Y^{\nu_\mu}$ lie in different blocks of $S_n$. It follows that $Y^\lambda$ and $Y^\mu$ also lie in different blocks of $S_n$.

We now come to the case of exceptional type: let $p=2$, $\lambda$ be a partition of exceptional 2-type, and let $Q$ be a vertex of $Y^\lambda$. Note that the support of $Q$ has size $n-r_0$. We define the partition $\hat{\lambda} = \lambda(0) + (n-r_0)$. By a similar argument to that given above for $Y^\nu(R)$, $Y^{\hat{\lambda}}(Q)$ has a summand in the block $b^{c_2(\lambda)} \otimes b_0(N_{S_{n-r_{0}}}(Q))$ of $N_{S_n}(Q)$. Hence, by Lemma~\ref{genbr2}, $Y^\lambda$ and $Y^{\hat{\lambda}}$ lie in the same block. Let $\sum_j \beta_j 2^j$ be the 2-adic expansion of $n-r_0$, where each $\beta_j \in \lbrace 0,1 \rbrace$, and note that some $\beta_j$ equals 1, because $n-r_0 \not= 0$. Then the 2-type of $\hat{\lambda}$ is $(r_0,\beta_1,\beta_2,\ldots)$, which is not an exceptional 2-type and therefore the above argument can be applied to $Y^{\hat{\lambda}}$. 

Indeed, if $\mu$ is another partition of $n$ which is not $p$-restricted and not of exceptional 2-type, the argument above shows that $Y^{\hat{\lambda}}$ and $Y^\mu$ lie in the same block if and only if $c_p(\hat{\lambda}) = c_p(\mu)$. But $Y^{\hat{\lambda}}$ and $Y^\lambda$ lie in the same block, and $c_p(\lambda) = c_p(\hat{\lambda})$, so we deduce that $Y^\lambda$ and $Y^\mu$ lie in the same block if and only if $c_p(\lambda) = c_p(\mu)$.    

\begin{proposition}\label{step3} Let $\lambda$ and $\mu$ be partitions of $n$, neither of which are $p$-restricted. Then $Y^\lambda$ and $Y^\mu$ lie in the same block if and only if $c_p(\lambda) = c_p(\mu)$. \end{proposition}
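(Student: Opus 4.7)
The plan is to assemble the ingredients developed in Step 3 to reduce, for each of $\lambda$ and $\mu$, to a conveniently shaped representative of the same $p$-type, then exploit a Brauer quotient with respect to $R = \langle (1,\ldots,p) \rangle$ to step down to $S_{n-p}$ and apply the inductive hypothesis. Proposition \ref{step2} has already handled the case of equal $p$-type, so the only real content here is transferring information between different $p$-types via a comparison with the intermediate group $N_{S_n}(R)$.

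For the generic case (when, if $p = 2$, neither $\lambda$ nor $\mu$ has exceptional 2-type), I would first choose partitions $\nu_\lambda, \nu_\mu$ of the same $p$-type as $\lambda, \mu$ respectively satisfying $(\nu_\star)_1 - (\nu_\star)_2 \geq p$; Proposition \ref{step2} guarantees $Y^{\nu_\lambda}, Y^{\nu_\mu}$ lie in the same blocks as $Y^\lambda, Y^\mu$. Lemma \ref{tens} together with a dominance argument shows that $Y^{\xi_\lambda} \boxtimes \F$ occurs as a summand of $Y^{\nu_\lambda}(R)$, where $\xi_\lambda = ((\nu_\lambda)_1 - p, (\nu_\lambda)_2, \ldots)$ is a partition of $n-p$ with $c_p(\xi_\lambda) = c_p(\lambda)$ (and similarly for $\mu$). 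Since $N_{S_p}(R)$ has a unique block by Lemma \ref{oneblock}, the inductive hypothesis places this summand in the block $b^{c_p(\lambda)} \otimes b_0(N_{S_p}(R))$ of $N_{S_n}(R)$. Lemma \ref{genbr2} combined with Brauer's First Main Theorem then delivers the conclusion: $Y^{\nu_\lambda}$ and $Y^{\nu_\mu}$ share a block of $S_n$ precisely when $c_p(\lambda) = c_p(\mu)$, and the same then holds for $Y^\lambda, Y^\mu$.

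The main obstacle is the $p = 2$ exceptional case, in which every nonzero $r_i$ with $i > 0$ equals $2$, so no $\nu(i)$ can be chosen to be 2-restricted while forcing a gap of at least $p$ between the first two parts of $\nu_\lambda$. My plan here is to replace $\lambda$ by the partition $\hat\lambda := \lambda(0) + (n - r_0)$, where $(n - r_0)$ denotes a single row of that length. Since the 2-adic expansion of $n - r_0$ has each binary digit $\beta_j \in \lbrace 0, 1 \rbrace$, the 2-type of $\hat\lambda$ is $(r_0, \beta_1, \beta_2, \ldots)$, which is not exceptional, and moreover $c_2(\hat\lambda) = c_2(\lambda)$. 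Running the Brauer quotient argument with $Q$ (a vertex of $Y^\lambda$) in place of $R$ produces a summand of $Y^{\hat\lambda}(Q)$ in the block $b^{c_2(\lambda)} \otimes b_0(N_{S_{n - r_0}}(Q))$; Lemma \ref{genbr2} then identifies $Y^\lambda$ and $Y^{\hat\lambda}$ as lying in the same block, reducing to the non-exceptional case already handled.
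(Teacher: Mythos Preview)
Your proposal is correct and follows essentially the same route as the paper's own argument in Step~3: reduce to representatives $\nu_\lambda,\nu_\mu$ of the given $p$-types with a gap of at least $p$ between the first two parts, use the dominance argument on $M^{\nu}(R)$ to pin down a summand $Y^{\xi}\boxtimes\F$ of $Y^{\nu}(R)$, invoke the inductive hypothesis on $S_{n-p}$ together with Lemma~\ref{oneblock} and Lemma~\ref{genbr2}, and handle the exceptional $2$-type via $\hat\lambda=\lambda(0)+(n-r_0)$. The only point you leave implicit is that $\nu_\lambda$ must be chosen with $c_p(\nu_\lambda)=c_p(\lambda)$ (so that Proposition~\ref{step2} actually places $Y^{\nu_\lambda}$ in the same block as $Y^\lambda$); the paper secures this by keeping $\nu_\lambda(0)=\lambda(0)$, and you clearly intend the same since you later assert $c_p(\xi_\lambda)=c_p(\lambda)$.
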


\subsection*{Step 4: Projective Case}
Now suppose that $\lambda$ is $p$-restricted. The module $Y^\lambda$ is projective and is its own Brauer quotient, so we require a different approach. We first deal with the case where $\lambda$ is a $p$-core.

\begin{proposition}\label{corep} Let $\gamma$ be a $p$-core. Then the $\F S_n$-Specht module $S^\gamma$ is simple and projective. \end{proposition}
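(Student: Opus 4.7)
The plan is to combine the hook length formula with the classical defect-zero criterion of block theory, to obtain a short, character-theoretic proof.

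First, I would invoke the hook length formula $\dim_\F S^\gamma = n!/\prod_{(i,j) \in [\gamma]} h_{ij}(\gamma)$ together with the standard fact (provable via the abacus model, as in \cite[p.~76--78]{JamKer}) that the number of hook lengths of a partition $\lambda$ divisible by $p$ equals the $p$-weight of $\lambda$. Since $\gamma$ is a $p$-core it has $p$-weight $0$, so none of the hook lengths $h_{ij}(\gamma)$ is divisible by $p$; hence $v_p(\dim S^\gamma) = v_p(n!) = v_p(|S_n|)$, so $|S_n|_p$ divides $\dim S^\gamma$.

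Next, I would apply the defect-zero criterion from modular representation theory: an ordinary irreducible character $\chi$ of a finite group $G$ satisfying $|G|_p \mid \chi(1)$ lies in a $p$-block of defect zero, and such a block is a simple matrix algebra containing a unique simple $\F G$-module, which is moreover projective and has dimension $\chi(1)$. Applied to the ordinary irreducible character $\chi^\gamma$ of $S_n$, this shows that the block of $S_n$ containing $S^\gamma$ is a defect-zero block; let $D$ denote its unique simple module, so $D$ is projective with $\dim D = \chi^\gamma(1) = \dim S^\gamma$.

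Finally, since a defect-zero block is a simple matrix algebra over $\F$, every module lying in it is a direct sum of copies of $D$. The Specht module $S^\gamma$ lies in this block (its lift to characteristic zero has character $\chi^\gamma$) and has dimension $\dim D$, forcing $S^\gamma \cong D$; hence $S^\gamma$ is simple and projective. The only real obstacle is citing the defect-zero criterion precisely enough to identify the block of $S^\gamma$ with the defect-zero block determined by $\chi^\gamma$; once that is in place, the conclusion is immediate.
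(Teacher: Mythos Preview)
Your proof is correct, and at its core it uses the same arithmetic fact as the paper: that for a $p$-core $\gamma$ the hook-length product is coprime to $p$, so $v_p(\chi^\gamma(1)) = v_p(n!)$ and the central idempotent $e_\gamma$ lies in $\Z_p S_n$. The difference is one of packaging. You invoke the hook length formula explicitly and then cite the general defect-zero criterion as a black box (an ordinary irreducible of $p'$-defect lies in a block which is a full matrix algebra over $\F$), whereas the paper reproves that criterion in situ for this particular character: it writes down $e_\gamma$, checks directly that the induced map $\Z_p S_n \to \End_{\Z_p}(S^\gamma) \cong M_{\chi^\gamma(1)}(\Z_p)$ is surjective via the trace formula $\beta = \frac{\chi^\gamma(1)}{n!}\sum_g \Tr(\theta(g^{-1})\beta)\theta(g)$, and then reduces mod $p$. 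Your route is shorter and more conceptual; the paper's is more self-contained, in line with its stated aim of assuming only ``basic knowledge of block theory'' and avoiding appeals to deeper results. Either way, the final identification of $S^\gamma$ with the unique simple module of the defect-zero block is the same dimension-counting step.
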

\begin{proof}
We let $e_\gamma = \frac{\chi^\gamma(1)}{n!}\sum_{g \in S_n} \chi^\gamma(g^{-1}) g$; this is the primitive central idempotent over $\Q_p$ corresponding to $S^\gamma$. The group algebra $\Q_p S_n$ is isomorphic to a direct sum of matrix algebras, and $e_\gamma Q_p S_n \cong M_{\chi^\gamma(1)}(\Q_p)$. Clearly, $e_\gamma$ induces an algebra homomorphism $\theta : \Q_p S_n  \rightarrow M_{\chi^\gamma(1)}(\Q_p)$ with kernel $(1-e_\gamma) \Q_p S_n$. Denote by $\phi$ the restriction of $\theta$ to $\Z_p S_n$; clearly $\phi$ has kernel $(1-e_\gamma) \Z_p S_n$.

Now, we claim that the image of $\phi$ is $M_{\chi^\gamma(1)}(\Z_p)$; since the character table of $S_n$ is integral and $\gamma$ is a $p$-core, $e_\gamma$ is defined over $\Z_p$, so $\phi$ maps into $M_{\chi^\gamma(1)}(\Z_p)$. If $\beta \in \End_{\Z_p}(S^\gamma)$, then by \cite[p.162]{Webb} we have $$\beta = \frac{\chi^\gamma(1)}{n!} \sum_{g \in S_n} \Tr(\theta(g^{-1} \beta ) \theta(g), $$ whence $\phi$ is surjective.

We can therefore identify the $\Z_p S_n$-Specht module $S^\gamma$ with the space of row vectors, as a module for $M_{\chi^\gamma(1)}(\Z_p)$. Reducing this modulo $p \Z_p$, we obtain $S^\gamma / p\Z_p S^\gamma$, which is isomorphic to the $\F S_n$-Specht module $S^\gamma$, but is also isomorphic to the only simple module for $M_{\chi^\gamma(1)}(\F_p)$. It follows that $S^\gamma$ is projective and simple.
\end{proof}
A block containing a simple projective module is necessarily a block of defect zero by \cite[Corollary 6.3.4]{Benson}, and hence that simple projective module is in fact the only indecomposable module in the block up to isomorphism. So if $\gamma$ is a $p$-core and $\lambda$ is any other partition, then $Y^\lambda$ lies in the same block as $Y^\gamma$ if and only if $\lambda = \gamma$. 

All that remains is to establish the result for $p$-restricted partitions which are not $p$-cores. Recall that $S^{(1^n)}$ denotes the sign module for $S_n$.

\begin{definition} \label{mullmap} The Mullineux map $m$ is a bijective involution on the set of $p$-regular partitions of $n$, defined by $m(\eta) = \mu$ if and only if $D^\eta \otimes S^{(1^n)} \cong D^\mu$. \end{definition}

We recall the following result on the dual of a Specht module. 

\begin{theorem} \label{dualspecht} \cite[Theorem 8.15]{Jam}
The dual of the Specht module $S^\lambda$ is isomorphic to $S^{\lambda'} \otimes S^{(1^n)}$, where $\lambda'$ denotes the conjugate partition to $\lambda$.
\end{theorem}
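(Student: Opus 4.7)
The plan is to prove this via a concrete $\F S_n$-homomorphism, using the standard bilinear form on $M^\lambda$. Equip $M^\lambda$ with the $S_n$-invariant symmetric bilinear form $\langle\cdot,\cdot\rangle$ making distinct $\lambda$-tabloids orthonormal; this identifies $M^\lambda$ with its own dual, so dualising the inclusion $S^\lambda \hookrightarrow M^\lambda$ yields $(S^\lambda)^\star \cong M^\lambda/(S^\lambda)^\perp$, where $(S^\lambda)^\perp$ denotes the annihilator of $S^\lambda$ under the form. It therefore suffices to construct a surjective $\F S_n$-homomorphism $\Phi \colon M^\lambda \to S^{\lambda'} \otimes S^{(1^n)}$ whose kernel lies inside $(S^\lambda)^\perp$.

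The crucial observation driving the construction of $\Phi$ is that transposing a $\lambda$-tableau $t$ to its conjugate $\lambda'$-tableau $t'$ exchanges the row stabiliser of $t$ with the column stabiliser of $t'$, so that $R_t = C_{t'}$. Hence replacing $t$ by $t\rho$ for $\rho \in R_t$ leaves the tabloid $\lbrace t \rbrace$ fixed but multiplies the polytabloid $e_{t'}$ by $\sgn(\rho)$, which is precisely the sign absorbed by tensoring with $S^{(1^n)}$. I would therefore define $\Phi$ by fixing a row-standard representative $t$ of each tabloid class and setting $\Phi(\lbrace t \rbrace) := e_{t'} \otimes 1$, then verify $\F S_n$-equivariance by tracking how a permutation $\sigma$ permutes the chosen representatives and how the accompanying sign cancels against the induced column-permutation of $t'$.

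Once $\Phi$ is in hand, surjectivity is automatic since polytabloids of the form $e_{t'}$ span $S^{\lambda'}$. To see $\ker \Phi \subseteq (S^\lambda)^\perp$ I would invoke the Submodule Theorem: $\ker \Phi$ is a submodule of $M^\lambda$, so either $S^\lambda \subseteq \ker \Phi$ or $\ker \Phi \subseteq (S^\lambda)^\perp$, and the first alternative is excluded by a direct calculation showing that $\Phi(e_t) \neq 0$ for some standard $\lambda$-tableau $t$, checked by pairing with an explicit basis tabloid of $M^{\lambda'}$. Dimension counting, using $\dim S^{\lambda'} = \dim S^\lambda$ so that the image of $\Phi$ and the quotient $M^\lambda/(S^\lambda)^\perp$ have the same dimension, then forces $\ker \Phi = (S^\lambda)^\perp$, completing the isomorphism.

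The principal obstacle is the sign bookkeeping involved in defining $\Phi$: verifying simultaneously that the definition is independent of the choice of representative tableaux, that the map is $\F S_n$-equivariant, and that it does not annihilate $S^\lambda$. This is the standard technical difficulty in comparing Specht modules of conjugate shapes, but once navigated the remaining argument reduces to a routine dimension-and-submodule comparison.
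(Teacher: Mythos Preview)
The paper does not supply its own proof of this statement; it is quoted from James \cite[Theorem~8.15]{Jam}. Your plan---build an explicit surjection $\Phi\colon M^\lambda\to S^{\lambda'}\otimes S^{(1^n)}$ and identify its kernel with $(S^\lambda)^\perp$ via the Submodule Theorem and a dimension count---is the standard route and is essentially the argument in that reference.

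Two points in your execution do not go through as written, though. First, the map $\{t\}\mapsto e_{t'}\otimes 1$ with $t$ row-standard is \emph{not} $S_n$-equivariant: already for $\lambda=(2,1)$ and $\sigma=(1\,2)$ acting on the tabloid with $1$ in the second row one finds $\sigma\Phi(\{t\})=-\Phi(\sigma\{t\})$. The correct definition carries an extra sign: fix a reference tableau $t^\lambda$, write each tableau as $gt^\lambda$, and set $\Phi(\{gt^\lambda\})=\sgn(g)\,e_{g(t^\lambda)'}\otimes 1$; this is well defined on tabloids precisely because $R_t=C_{t'}$, and is then genuinely a homomorphism. Second, your use of the Submodule Theorem fails in positive characteristic: you want to rule out $S^\lambda\subseteq\ker\Phi$ by exhibiting $\Phi(e_t)\neq 0$, but with the corrected map one has $\Phi(e_{t^\lambda})=\bigl(\sum_{\pi\in R_{t'}}\pi\,e_{t'}\bigr)\otimes 1$ for $t'=(t^\lambda)'$, and for $\lambda=(1^p)$ this is $p!\,\{t'\}\otimes 1=0$, so $S^\lambda\subseteq\ker\Phi$ really occurs. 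The fix is to bypass the dichotomy and verify $\ker\Phi\subseteq(S^\lambda)^\perp$ directly: a short computation shows that for any $\lambda$-tableau $t$ the pairing $\langle\{s\},e_t\rangle$ coincides, up to a sign depending only on $t$, with the coefficient of the tabloid $\{t'\}$ in $\Phi(\{s\})$, so $\Phi(v)=0$ forces $\langle v,e_t\rangle=0$. With these two repairs your dimension count finishes the proof.
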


If $\lambda$ is 2-restricted, then the conjugate partition $\lambda'$ is 2-regular and hence the Specht module $S^{\lambda'}$ is indecomposable by \cite[Theorem 3.2]{Wild}. In characteristic 2, the sign representation and the trivial representation coincide, so Theorem~\ref{dualspecht} and Lemma~\ref{dualblock} imply that $S^\lambda$ and $S^{\lambda'}$ lie in the same block. If both $\lambda$ and $\lambda'$ are 2-restricted, then $\lambda$ is a 2-core; otherwise we may apply our earlier arguments to the module $Y^{\lambda'}$. Therefore, in characteristic 2, we have already established the result.

For the rest of this step, suppose that $p$ is odd; recall that in odd characteristic, Specht modules are indecomposable. By Lemma \ref{dualblock}, $S^\lambda$ and $(S^\lambda)^\star \cong S^{\lambda'} \otimes S^{(1^n)}$ lie in the same block of $S_n$, say $B$. Now, $S^\lambda / \text{rad($S^\lambda$)} \otimes S^{(1^n)} \cong D^{\lambda'} \otimes S^{(1^n)} = D^{m(\lambda')}$ is a composition factor of $(S^\lambda)^\star$, and hence lies in $B$. Therefore, by Lemma \ref{chanlab}, $Y^\lambda$ and $Y^{m(\lambda')}$ lie in the same block. The key result is the following.

\begin{proposition}\label{multrans}Let $\lambda$ be a $p$-restricted partition of $n$. Then $\lambda \unlhd m(\lambda')$, with equality only if $\lambda$ is a $p$-core. \end{proposition}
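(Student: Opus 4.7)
The plan is to identify the socle of $S^\lambda$ with $D^{m(\lambda')}$ via Theorem \ref{dualspecht}, deduce the dominance inequality from the standard bound on composition factors of Specht modules, and in the equality case argue that the resulting simplicity of $S^\lambda$ forces $\lambda$ to be a $p$-core.

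Since $\lambda$ is $p$-restricted, $\lambda'$ is $p$-regular, so $S^{\lambda'}$ has head $D^{\lambda'}$, and hence $S^{\lambda'} \otimes S^{(1^n)}$ has head $D^{\lambda'} \otimes S^{(1^n)} = D^{m(\lambda')}$ by Definition \ref{mullmap}. By Theorem \ref{dualspecht}, this tensor product is isomorphic to $(S^\lambda)^*$; dualising (and using the self-duality of simple $\F S_n$-modules, which holds because every element of $S_n$ is conjugate to its inverse) yields $\soc(S^\lambda) \cong D^{m(\lambda')}$. Since the socle is a composition factor of $S^\lambda$, and every $p$-regular composition factor $D^\mu$ of $S^\lambda$ satisfies $\mu \unrhd \lambda$ (by the standard consequence of the Submodule Theorem already used in the paper after Theorem \ref{main}), we conclude $\lambda \unlhd m(\lambda')$. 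If equality holds, then $D^\lambda$ is simultaneously the head and the socle of $S^\lambda$; since $[S^\lambda : D^\lambda] = 1$, this forces $S^\lambda = D^\lambda$, so $S^\lambda$ is simple.

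The main obstacle is to deduce from the simplicity of $S^\lambda$ that $\lambda$ must be a $p$-core. My plan is to show that the Young module $Y^\lambda$, which is projective and indecomposable by Theorem \ref{main}, coincides with $S^\lambda$: then $D^\lambda = S^\lambda$ is simple and projective, so $\dim S^\lambda$ is divisible by $|S_n|_p$, and the hook length formula for $\dim S^\lambda$ forces every hook of $\lambda$ to be coprime to $p$, which is precisely the defining property of a $p$-core. That $S^\lambda = D^\lambda$ embeds as the socle of $Y^\lambda$ is automatic, since $\F S_n$ is symmetric and so the PIM $Y^\lambda$ has simple socle equal to its head. The delicate step is ruling out further composition factors of $Y^\lambda$. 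My approach is to exploit the sign-twist symmetry $d_{\nu \mu} = d_{\nu' m(\mu)}$ of the decomposition matrix (a direct consequence of Theorem \ref{dualspecht} and Definition \ref{mullmap}), together with the analogous identity $S^{\lambda'} = D^{\lambda'}$ obtained by applying the same duality argument to $\lambda'$, in order to force the column of $D^\lambda$ in the decomposition matrix to be a single $1$; this then collapses the Specht filtration of $Y^\lambda$ to a single copy of $S^\lambda$ and completes the proof.
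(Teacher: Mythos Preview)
Your argument for the inequality $\lambda \unlhd m(\lambda')$ is correct and in fact more direct than the paper's: you identify $\soc(S^\lambda)$ with $D^{m(\lambda')}$ and invoke the unitriangularity of the decomposition matrix, whereas the paper passes through the lift $P^\lambda_{\Z}$ of $Y^\lambda$ and Brauer reciprocity to reach the same conclusion. Your deduction that $S^\lambda$ is simple in the equality case also matches the paper's.

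For the step ``$S^\lambda$ simple $\Rightarrow$ $\lambda$ is a $p$-core'' your route diverges from the paper's. The paper simply quotes \cite[23.6(ii)]{Jam}: if both $\lambda$ and $\lambda'$ are $p$-regular and $S^\lambda$ is simple then $p$ divides no hook length of $\lambda$, whence (by \cite[2.7.40]{JamKer}) $\lambda$ is a $p$-core. Your plan --- show $Y^\lambda = S^\lambda$, so that $|S_n|_p$ divides $\dim S^\lambda$, and then read off from the hook length formula that no hook length is divisible by $p$ --- is a legitimate alternative that avoids that citation.

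However, there is a gap in your execution. The identity $S^{\lambda'} = D^{\lambda'}$ is a statement about the \emph{row} $\lambda'$ of the decomposition matrix, and the sign-twist symmetry $d_{\nu\mu} = d_{\nu'\, m(\mu)}$ merely transports it back to the row $\lambda$, which you already know from $S^\lambda = D^\lambda$. It does not, as stated, control the \emph{column} of $D^\lambda$, which is what you need to collapse the Specht filtration of $Y^\lambda$. What does work is this: in the equality case $m(\lambda) = \lambda'$, so the sign-twist gives $d_{\nu,\lambda} = d_{\nu',\lambda'}$ for every $\nu$. Unitriangularity applied to the left-hand side yields $d_{\nu,\lambda}\neq 0 \Rightarrow \nu \unlhd \lambda$, while applied to the right-hand side it yields $\nu' \unlhd \lambda'$, i.e.\ $\nu \unrhd \lambda$. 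Together these force $\nu = \lambda$, so the $D^\lambda$-column is indeed a single $1$; by Brauer reciprocity the lift of $Y^\lambda$ has ordinary character $\chi^\lambda$, hence $Y^\lambda = S^\lambda$ is projective, and your hook-length argument finishes the job.
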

\begin{proof}
Since $\lambda$ is $p$-restricted, $\lambda$ labels a simple $\F_p S_n$-module $D_\lambda$ which has projective cover $Y^\lambda$. There is an inclusion $D_\lambda \rightarrow S^\lambda$, and by duality we obtain a surjective restriction homomorphism $(S^\lambda)^\star \rightarrow (D_\lambda)^\star \cong D_\lambda$. Write $S^\lambda_\Z$ for the Specht module $S^\lambda$ defined over the $p$-adic integers $\Z_p$. The map $${(S^\lambda_\Z)}^\star \rightarrow {(S^\lambda_\Z / pS^\lambda_\Z)}^\star \cong (S^\lambda)^\star$$ is surjective, and composing this with the surjective restriction $(S^\lambda)^\star \rightarrow D_\lambda$ yields a surjective homomorphism ${(S^\lambda_\Z)}^\star \rightarrow D_\lambda$.  Let $P^\lambda_\Z$ denote the unique (up to isomorphism) indecomposable projective $\Z_p S_n$-module lifting $Y^\lambda$ to $\Z_p$, so there is a surjective homomorphism $P^\lambda_\Z \rightarrow D_\lambda$. Since $P^\lambda_\Z$ is projective, we obtain a non-zero homomorphism $P^\lambda_\Z \rightarrow {(S^\lambda_\Z)}^\star $. Let $P^\lambda_\Z$ have character  $\chi$; then the character $\chi^\lambda$ is a constituent of $\chi$, because the ordinary character of ${(S^\lambda_\Z)}^\star$ is $\chi^\lambda$.

Set $\mu = m(\lambda')$, and observe that $D_\lambda = D^\mu$. Hence $Y^\lambda$ is a projective cover of $D^\mu$, whence there is a surjective homomorphism $P^\lambda_\Z \rightarrow D^\mu$. There is also the canonical surjection from $S^\mu_\Z$ onto $D^\mu$, so again by the universal property of projective modules, there is a non-zero homomorphism $P^\lambda_\Z \rightarrow S^\mu_\Z$. Hence $\chi^{\mu}$ is also a constituent of $\chi$, so $\langle \chi, \chi^{\mu} \rangle \not= 0$. 

Now, we have $$\chi = \sum_{\nu} \langle \chi, \chi^\nu \rangle \chi^\nu  = \sum_{\nu} [S^\nu : D_\lambda]\chi^\nu,
$$ where the second equality follows from Brauer Reciprocity (\cite[Section 15.4]{Serre}). Hence $[S^\mu : D_\lambda] \not= 0$, and so $\mu \unrhd \lambda$. 

Finally, suppose that $\lambda = \mu$ (which makes sense because $\lambda$ is necessarily $p$-regular and $p$-restricted), so $D^\lambda = D_\lambda$: we claim then that $\lambda$ is a $p$-core. Therefore, $S^\lambda$ is simple, because otherwise $S^\lambda$ would have two composition factors isomorphic to $D^\lambda$, but $[S^\lambda : D^\lambda] = 1$.  Since both $\lambda$ and $\lambda'$ are $p$-regular, it follows from \cite[23.6(ii)]{Jam} that $p$ does not divide the product of the hook lengths in the Young diagram of $\lambda$. But then there are no rim hooks of length divisible by $p$ in the Young diagram of $\lambda$; by \cite[2.7.40]{JamKer}, $\lambda$ is a partition of $p$-weight zero, namely a $p$-core, as required.
\end{proof}

Define a map on $p$-restricted partitions by $f(\lambda) = m(\lambda')$. If $\lambda$ is $p$-restricted, but not a $p$-core, then repeatedly applying the above yields a chain of partitions $\lambda \lhd f(\lambda) \lhd f^2(\lambda) \lhd \ldots$ which are strictly increasing in the dominance order, and stops the first time we reach a partition which is not $p$-restricted. Say the chain terminates at $\mu$, so $\mu$ is not $p$-restricted; that is, $Y^\mu$ is non-projective. By a repeated application of the argument given in the paragraph preceding Proposition \ref{multrans}, the modules $Y^\lambda, Y^{f(\lambda)},\ldots,Y^\mu$ all lie in the same block. Since $\mu$ is not $p$-restricted, we can apply Proposition \ref{step3} to $Y^\mu$, and deduce the following:

\begin{proposition}\label{step4} Let $\lambda$ and $\mu$ be $p$-restricted partitions of $n$, neither of which are $p$-cores. Then $Y^\lambda$ and $Y^\mu$ lie in the same block if and only if $c_p(\lambda) = c_p(\mu)$. \end{proposition}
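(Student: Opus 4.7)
The preceding paragraph has already done most of the work: for each $p$-restricted non-$p$-core partition $\lambda$, iterating the map $f = m \circ ({}')$ produces a non-$p$-restricted partition $\mu_\lambda$ such that $Y^\lambda$ and $Y^{\mu_\lambda}$ lie in the same block of $\F S_n$. (The chain necessarily terminates before reaching another $p$-core, since a simple projective Young module in a defect-zero block cannot share a block with a distinct Young module.) My plan is to apply this construction to both $\lambda$ and $\nu$, combine with Proposition \ref{step3}, and exploit a combinatorial property of the Mullineux map to finish.

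Applying the construction to both partitions and using transitivity of the same-block relation, $Y^\lambda$ and $Y^\nu$ lie in the same block if and only if $Y^{\mu_\lambda}$ and $Y^{\mu_\nu}$ do. Since $\mu_\lambda$ and $\mu_\nu$ are non-$p$-restricted, Proposition \ref{step3} converts the latter condition into $c_p(\mu_\lambda) = c_p(\mu_\nu)$. Hence the desired equivalence reduces to showing that $f$ preserves the $p$-core, i.e.\ $c_p(f(\lambda)) = c_p(\lambda)$, and then concluding $c_p(\mu_\lambda) = c_p(\lambda)$ by iteration along the chain (and similarly for $\nu$).

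The preservation identity follows from two combinatorial facts: first, $c_p(\lambda') = c_p(\lambda)'$, which holds because conjugation acts on the abacus by a runner reversal that commutes with the push-to-top procedure defining $c_p$; and second, the Mullineux-involution identity $c_p(m(\eta)) = c_p(\eta)'$. Composing these gives $c_p(f(\lambda)) = c_p(m(\lambda')) = c_p(\lambda')' = c_p(\lambda)$, and substituting into the reduction of the previous paragraph completes the proof.

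The main obstacle is the second identity above, that the Mullineux map sends each $p$-core to its conjugate. In the presence of Nakayama's conjecture it would be almost tautological, since tensoring with the sign module sends $b^\gamma$ to $b^{\gamma'}$; but invoking Nakayama here would be circular. A self-contained verification must be combinatorial, using the abacus description of $m$ as a suitable runner reflection, and is classical. I would cite this from Mullineux's original paper or the more modern treatment of Bessenrodt--Olsson rather than reprove it here.
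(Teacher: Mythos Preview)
Your overall structure is exactly the paper's: iterate $f=m\circ(\,{}'\,)$ until you reach a non-$p$-restricted partition, invoke the ``same block'' argument from the paragraph before Proposition~\ref{multrans} at each step, and then apply Proposition~\ref{step3}. You are also right that the deduction of Proposition~\ref{step4} as stated needs the preservation identity $c_p(f(\lambda))=c_p(\lambda)$; the paper is silent on this point and you have correctly isolated it.

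The gap is in your proposed justification of that identity. The Mullineux involution is \emph{not} a ``runner reflection'' on the abacus, suitable or otherwise; runner reversal gives conjugation, not $m$. The combinatorial descriptions of $m$ that do exist (Mullineux's original algorithm, or the Kleshchev/Xu reformulation) only compute the representation-theoretically defined map of Definition~\ref{mullmap} by virtue of the Ford--Kleshchev theorem, which is a deep result far outside the elementary, self-contained spirit of this paper. So ``cite it from Mullineux's original paper'' does not work: Mullineux's paper defines a combinatorial bijection $M$ and proves $c_p(M(\eta))=c_p(\eta)'$ for \emph{that} bijection, but you would still need $M=m$. Similarly, Bessenrodt--Olsson work with the combinatorial map. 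You flagged the danger of circularity with Nakayama, but the actual obstacle is one level earlier: you cannot access any abacus description of $m$ without the Mullineux conjecture, and importing that here defeats the purpose of the argument.

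In short, the reduction is the same as the paper's and is correct; what is missing, both in the paper and in your proposal, is a self-contained reason why $c_p(m(\lambda'))=c_p(\lambda)$. Your suggested route to it does not work as stated.
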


This, combined with Propositions \ref{step3} and \ref{corep}, completes the proof of Nakayama's Conjecture.

\subsection*{Acknowledgements} I am grateful to my supervisor Dr. Mark Wildon for his many insightful comments and valuable suggestions. I would also like to thank Jasdeep Kochhar for identifying an error in an earlier draft of this paper.


\begin{thebibliography}{99}

\bibitem{Alp}
{\sc Alperin, J.~L.}
\newblock {\em Local representation theory}, {Cambridge studies in advanced mathematics, vol. 11},
\newblock Cambridge University Press, 1986.

\bibitem{Benson}
{\sc Benson, D. J.}
\newblock {\em Representations and cohomology I},
{Cambridge studies in advanced mathematics, vol. 30,}
\newblock Cambridge University Press, 1995.

\bibitem{Brauer}
{\sc Brauer, R.}
\newblock {\em On a conjecture by Nakayama},
\newblock Trans. Royal Soc. Canada. III, \textbf{41} (1947), 11--19.

\bibitem{Br1}
{\sc Brou{\'{e}}, M.}
\newblock {\em On {S}cott modules and {$p$}-permutation modules: an approach through
  the {B}rauer homomorphism},
\newblock  Proc.~Amer.~Math.~Soc.,\/ \textbf{93} no. 3 (1985), 401--408.

\bibitem{Br2}
{\sc Brou{\'{e}}, M.}
\newblock {\em Les $l$-blocs des groupes $\GL(n, q)$ et $U(n, q^2)$ et leurs structures locales},
\newblock Ast\'erisque 133-–134 (1986), 159-–188.

\bibitem{Erd}
{\sc Erdmann, K.}
\newblock {\em Young modules for symmetric groups},
\newblock { J.~Aust.~Math.~Soc.,\/} 71 no. 2 (2001), 201--210.

\bibitem{ErdSch}
{\sc Erdmann, K. and Schroll, S.}
\newblock {\em On Young modules of general linear groups},
\newblock { J. Alg.,\/} \textbf{310} no. 1, (2007), 434--451.

\bibitem{Grab}
{\sc Grabmeier, J.}
 \newblock {\em Unzerlegbare Moduln mit trivialer Younquelle und Darstellungstheorie der Schuralgebra},
 \newblock	{ Bayreuth. Math. Schr.,} \textbf{20} (1985), 9--152.

\bibitem{Jam}
{\sc James, G.~D.}
\newblock {\em The representation theory of the symmetric groups}, vol.~682 of
  { Lecture Notes in Mathematics}.
\newblock Springer-Verlag, 1978.

\bibitem{JamKer}
{\sc James, G.~D. and Kerber, A.}
\newblock {\em The representation theory of the symmetric group},
\newblock Addison-Wesley, 1981.

\bibitem{Kly}
{\sc Klyachko, A. A.}.
\newblock {\em Direct Summands of Permutation Modules},
\newblock {Selecta Math. Soviet. \/} I(3), (1983--1984), 45--55.

\bibitem{MeiTap}
{\sc Meier, N. and Tappe, J.}
\newblock {\em Ein neuer Beweis der Nakayama-Vermutung \"uber die Blockstruktur symmetrischer Gruppen},
\newblock Bull. Lond. Math. Soc., \textbf{8} (1976), no. 1, 34--37.

\bibitem{Mur}
{\sc Murphy, G. E.}.
\newblock {\em The idempotents of the symmetric group and Nakayama's conjecture},
\newblock {J. Alg.\/}, \textbf{81} (1983), 258--265.

\bibitem{Robinson}
{\sc Robinson, G. de B.}
\newblock {\em On a conjecture by Nakayama},
\newblock Trans. Royal Soc. Canada. III, \textbf{41} (1947), 20--25.

\bibitem{Serre}
{\sc Serre, J.~P.}
\newblock {\em Linear representations of finite groups},
 GTM, vol.~42, Springer, 1977.
 
\bibitem{Webb}
{\sc Webb, P.}
\newblock {\em A Course in Finite Group Representation Theory}, Cambridge University Press, 2016.


\bibitem{Wild}
{\sc Wildon, M}.
\newblock {\em Vertices of Specht modules and blocks of the symmetric group},
\newblock {J. Alg.\/}, \textbf{323} (2010), 2242--2256.



\end{thebibliography}
 \end{document}